\newtheorem{theorem}{Theorem}[section]
\newtheorem{lemma}[theorem]{Lemma}
\newtheorem{proposition}[theorem]{Proposition}
\newtheorem{corollary}[theorem]{Corollary}
\theoremstyle{definition}
\newtheorem{definition}[theorem]{Definition}
\newtheorem{example}[theorem]{Example}
\theoremstyle{remark}
\newtheorem{remark}[theorem]{Remark}
\newtheorem{notation}[theorem]{Notation}
\numberwithin{equation}{section}
\begin{document}

\setcounter{page}{1}

\title[$C^*$-module operators which satisfy in GCSI]
{$C^*$-module operators which satisfy in the generalized Cauchy--Schwarz type inequality}

\author[A.~Zamani]
{Ali Zamani}

\address{School of Mathematics and Computer Sciences, Damghan University, P.O.BOX 36715-364, Damghan, Iran}
\email{zamani.ali85@yahoo.com}

\subjclass[2010]{46L08; 47A63}

\keywords{Hilbert $C^*$-module, hyponormal operators, operator inequality, Cauchy--Schwarz inequality.}
\begin{abstract}
Let $\mathcal{L}(\mathscr{H})$ denote the $C^*$-algebra of adjointable operators on
a Hilbert $C^*$-module $\mathscr{H}$. We introduce the generalized Cauchy--Schwarz inequality
for operators in $\mathcal{L}(\mathscr{H})$ and investigate various properties of operators which
satisfy the generalized Cauchy--Schwarz inequality.
In particular, we prove that if an operator $A\in\mathcal{L}(\mathscr{H})$ satisfies the generalized Cauchy--Schwarz inequality
such that $A$ has the polar decomposition, then $A$ is paranormal.
In addition, we show that if for $A$ the equality holds in the generalized Cauchy--Schwarz inequality,
then $A$ is cohyponormal. Among other things, when $A$
has the polar decomposition, we prove that $A$ is semi-hyponormal if and only if
$\big\|\langle Ax, y\rangle\big\| \leq \big\|{|A|}^{1/2}x\big\|\big\|{|A|}^{1/2}y\big\|$
for all $x, y \in\mathscr{H}$.
\end{abstract} \maketitle
\section{Introduction and preliminaries}
One of the most important inequalities in mathematics is
the Cauchy--Schwarz inequality. Its most familiar version states that in a
Hilbert space $\big(X, [\cdot, \cdot]\big)$, it holds
\begin{align*}
|[\zeta, \eta]| \leq \|\zeta\|\|\eta\| \qquad (\zeta, \eta \in X).
\end{align*}
This classical inequality has a lot of elegant applications, for instance, in
classical and modern analysis, partial differential equations, multivariable calculus and probability theory.
There are also several extensions of this inequality in various settings for different objects,
see \cite{A.B.F.M} and references therein.
Some generalizations of the Cauchy–-Schwarz inequality in Hilbert spaces can be found in \cite{B.D, C.K.K, Wat}.

The notion of Hilbert $C^*$-module is a generalization of that of Hilbert space in which the inner product
takes its values in a $C^*$-algebra instead of the complex field.
We use $\mathcal{L}(\mathscr{H})$ to denote the $C^*$-algebra of adjointable operators
on a Hilbert $C^*$-module $\big(\mathscr{H}, \langle \cdot, \cdot\rangle\big)$.
Let $\mathcal{L}(\mathscr{H})_{+}$ be the set of positive elements in $\mathcal{L}(\mathscr{H})$.
For any $A\in\mathcal{L}(\mathscr{H})$, the range
and the null space of $A$ are denoted by $\mathcal{R}(A)$ and $\mathcal{N}(A)$, respectively.
An operator $A\in\mathcal{L}(\mathscr{H})$ is said to be normal if $AA^* = A^*A$, cohyponormal if $|A|^2 \leq |A^*|^2$,
semi-hyponormal if $|A^*| \leq |A|$ and paranormal if $\|Ax\|^2 \leq \|A^2x\|\|x\|$ for every $x \in \mathscr{H}$.
For details about $C^*$-algebras and Hilbert $C^*$-modules we refer the reader to \cite{Lan, Wegge}.

A version of the Cauchy--Schwarz inequality in $\mathscr{H}$ appeared in \cite{Lan} as follows:
\begin{align}\label{I.1.1}
\|\langle x, y\rangle\|\leq \|x\|\|y\| \qquad (x, y \in \mathscr{H}).
\end{align}

Davis \cite{Dav}, Joi\c{t}a \cite{Joi}, Ilisevi\'c--Varo\v{s}anec \cite{I.V}, Aramba\v{s}i\'{c}--Baki\'{c}--Moslehian \cite{A.B.M}
and Fujii--Fujii--Seo \cite{F.F.S} have investigated the Cauchy-–Schwarz inequality and its various reverses
in the framework of $C^*$-algebras and Hilbert $C^*$-modules.

Although Hilbert $C^*$-modules generalize Hilbert spaces some fundamental properties of Hilbert
spaces are no longer valid in Hilbert $C^*$-modules in their full generality.
For instance, norm-closed or even orthogonally closed submodules may not be orthogonal
summands, and an adjointable operator between Hilbert $C^*$-modules may have no polar
decomposition, cf. \cite{Wegge}. Therefore, when we are studying Hilbert $C^*$-modules, it is always of
interest under which conditions the results analogous to those for Hilbert spaces
can be reobtained, as well as which more general situations might appear.

In this paper, by using some ideas of \cite{C.K.K, Wat}, we introduce the generalized Cauchy--Schwarz
inequality for operators in $\mathcal{L}(\mathscr{H})$ and investigate various properties of operators
which satisfy the generalized Cauchy-Schwarz inequality.
In particular, we prove that if an operator $A\in\mathcal{L}(\mathscr{H})$ satisfies the generalized
Cauchy--Schwarz inequality, then $\mathcal{N}(A) = \mathcal{N}(A^2)$.
In addition, we show that if for $A$ the equality holds in the generalized Cauchy--Schwarz inequality,
then $A$ is cohyponormal. Among other things, when $A$
has the polar decomposition, we prove that $A$ is semi-hyponormal if and only if
$\big\|\langle Ax, y\rangle\big\| \leq \big\|{|A|}^{1/2}x\big\|\big\|{|A|}^{1/2}y\big\|$
for all $x, y \in\mathscr{H}$. Some other related results are also discussed.
\section{Main Results}
Throughout this section, $\mathscr{A}$ is a $C^*$-algebra and $\mathscr{H}$ is a Hilbert $\mathscr{A}$-module.
We start our work with the following definition.
\begin{definition}\label{D.2.1}
An operator $A\in \mathcal{L}(\mathscr{H})$ is said to satisfy the generalized Cauchy--Schwarz inequality
if there exists $\lambda \in (0, 1)$ such that
\begin{align}\label{I.D.2.1}
\|\langle Ax, y\rangle\|\leq (\|Ax\|\|y\|)^{\lambda}(\|Ay\|\|x\|)^{1 - \lambda} \qquad (x, y \in \mathscr{H}).
\end{align}
\end{definition}
\begin{notation}\label{N.2.3}
The collection of adjointable operators on $\mathscr{H}$
which satisfy the generalized Cauchy--Schwarz inequality is denoted by $GCSI(\mathscr{H})$.
\end{notation}
\begin{remark}\label{R.2.2}
When $A$ is the identity operator on $\mathscr{H}$,
the inequality in definition \ref{D.2.1} turns into \eqref{I.1.1}, that is,
we arrive at the usual Cauchy--Schwarz inequality in $\mathscr{H}$.
\end{remark}
\begin{remark}
Let $A\in GCSI(\mathscr{H})$. Then there exists $\lambda\in(0,1)$ such that \eqref{I.D.2.1} holds.
If $\lambda'\in (\lambda, 1)$, then \eqref{I.D.2.1} is also valid for $\lambda'$.
Indeed, there is  $\alpha\in (0,1)$ such that $\lambda'=\alpha \lambda+(1-\alpha)$. By \eqref{I.D.2.1} we get
\begin{align}\label{a11}
\|\langle Ax, y\rangle\|^\alpha\leq (\|Ax\|\|y\|)^{\alpha\lambda}(\|Ay\|\|x\|)^{\alpha(1 - \lambda)} \qquad (x, y \in \mathscr{H}).
\end{align}
Also by \eqref{I.1.1} we have
\begin{align}\label{a12}
\|\langle Ax,y\rangle \|^{(1-\alpha)}\leq (\|Ax\|\|y\|)^{(1-\alpha)} \qquad (x, y \in \mathscr{H}).
\end{align}
From \eqref{a11} and \eqref{a12} we obtain
\begin{align*}
\|\langle Ax,y\rangle\|\leq (\|Ax\|\|y\|)^{\alpha\lambda+(1-\alpha)}(\|Ay\|\|x\|)^{\alpha(1-\lambda)} \qquad (x, y \in \mathscr{H}).
\end{align*}
Now, since $\alpha(1-\lambda) = 1 - \lambda'$, by the above above inequality we have
\begin{align*}
\|\langle Ax, y\rangle\|\leq (\|Ax\|\|y\|)^{\lambda'}(\|Ay\|\|x\|)^{1 - \lambda'} \qquad (x, y \in \mathscr{H}).
\end{align*}
\end{remark}
\begin{remark}\label{R.2.2.5}
let $\mathscr{A} = \mathbb{C}$, $\mathscr{H} = \mathbb{C}^3$ and put
\begin{align*}
A = \left(\begin{array}{ccc}
1 & 0 & 0\\
0 & 0 & 1\\
0 & 0 & 0\\
\end{array}\right) \in \mathcal{L}(\mathscr{H}).
\end{align*}
Then simple computations show that $A\notin GCSI(\mathscr{H})$ (see also Theorem \ref{T.2.5}).
\end{remark}
For an arbitrary Hilbert $C^*$-module $\mathscr{H}$ there are many examples for operators in $GCSI(\mathscr{H})$.
\begin{proposition}\label{T.2.4}
Let $A\in \mathcal{L}(\mathscr H)$ be a normal. Then
\begin{align*}
\|\langle Ax, y\rangle\|\leq (\|Ax\|\|y\|)^{\lambda}(\|Ay\|\|x\|)^{1-\lambda} \qquad (x, y \in \mathscr{H}),
\end{align*}
for every $\lambda\in (0,1)$.
\end{proposition}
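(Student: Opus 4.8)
The plan is to reduce everything to the ordinary Hilbert $C^*$-module Cauchy--Schwarz inequality \eqref{I.1.1}, applied twice, and then to interpolate between the two resulting estimates by a weighted geometric mean. The single structural fact I need from normality is the norm identity $\|Ax\| = \|A^*x\|$ for all $x \in \mathscr{H}$. This should follow directly: since $AA^* = A^*A$, for every $x$ one has $\langle Ax, Ax\rangle = \langle A^*Ax, x\rangle = \langle AA^*x, x\rangle = \langle A^*x, A^*x\rangle$, and taking norms gives $\|Ax\|^2 = \|A^*x\|^2$, hence $\|Ax\| = \|A^*x\|$.

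With this in hand, I would produce two complementary bounds on $\|\langle Ax, y\rangle\|$. Applying \eqref{I.1.1} directly to the pair $(Ax, y)$ gives $\|\langle Ax, y\rangle\| \leq \|Ax\|\|y\|$. On the other hand, using adjointness $\langle Ax, y\rangle = \langle x, A^*y\rangle$ together with \eqref{I.1.1} applied to $(x, A^*y)$ and the normality identity $\|A^*y\| = \|Ay\|$ yields $\|\langle Ax, y\rangle\| \leq \|x\|\|A^*y\| = \|Ay\|\|x\|$. Thus I will have established, for all $x, y \in \mathscr{H}$, the two inequalities $\|\langle Ax, y\rangle\| \leq \|Ax\|\|y\|$ and $\|\langle Ax, y\rangle\| \leq \|Ay\|\|x\|$.

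To finish, I would fix $\lambda \in (0,1)$, raise the first inequality to the power $\lambda$ and the second to the power $1 - \lambda$, and multiply them. Since $\|\langle Ax, y\rangle\|^{\lambda}\,\|\langle Ax, y\rangle\|^{1-\lambda} = \|\langle Ax, y\rangle\|$, this gives exactly
\begin{align*}
\|\langle Ax, y\rangle\| \leq (\|Ax\|\|y\|)^{\lambda}(\|Ay\|\|x\|)^{1-\lambda} \qquad (x, y \in \mathscr{H}),
\end{align*}
which is the claim. Because $\lambda$ was arbitrary in $(0,1)$, the inequality holds for every such $\lambda$.

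I do not anticipate a serious obstacle here; the argument is short once the right two estimates are in place. The only point requiring care is the derivation of $\|Ax\| = \|A^*x\|$, since in the $C^*$-module setting one must work with the $\mathscr{A}$-valued inner product and the identity $\|\langle z, z\rangle\| = \|z\|^2$ rather than with scalar inner products; but this is routine and uses only the defining norm on $\mathscr{H}$. The interpolation step is purely a matter of monotonicity of $t \mapsto t^{\lambda}$ on nonnegative reals, so no positivity or ordering subtleties in $\mathscr{A}$ intervene.
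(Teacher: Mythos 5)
Your proposal is correct and follows essentially the same route as the paper: derive $\|Az\|=\|A^*z\|$ from normality, obtain the two bounds $\|\langle Ax,y\rangle\|\leq\|Ax\|\|y\|$ and $\|\langle Ax,y\rangle\|\leq\|x\|\|Ay\|$ via the Cauchy--Schwarz inequality \eqref{I.1.1}, and combine them by raising to the powers $\lambda$ and $1-\lambda$. No gaps.
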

\begin{proof}
Let $\lambda\in (0,1)$. Since $A$ is normal, for every $z \in \mathscr{H}$, we have $\|A^*z\|=\|Az\|$.
Therefore,
\begin{align*}
\|\langle Ax, y\rangle\|=\|\langle x,A^*y\rangle\|\leq \|x\|\|A^*y\|=\|x\|\|Ay\|,
\end{align*}
for all $x, y \in \mathscr{H}$. Thus
\begin{align}\label{sc11}
\|\langle Ax,y\rangle\|^{1-\lambda}\leq (\|x\|\|Ay\|)^{1-\lambda} \qquad(x, y\in \mathscr H).
\end{align}
Also by \eqref{I.1.1} we have
\begin{align}\label{sc22}
\|\langle Ax,y\rangle\|^{\lambda}\leq (\|Ax\|\|y\|)^{\lambda} \qquad(x, y\in \mathscr H).
\end{align}
Utilizing \eqref{sc11} and \eqref{sc22}, we deduce the desired result.
\end{proof}
In the following proposition, we provide some evident properties of $GCSI(\mathscr{H})$
deduced from Definition \ref{D.2.1}.
\begin{proposition}\label{P.2.4.1}
The following statements hold.
\begin{itemize}
\item[(i)] $GCSI(\mathscr{H})$ is closed in the strong topology for adjointable operators.
\item[(ii)] If $A\in GCSI(\mathscr{H})$, then $\alpha A\in GCSI(\mathscr{H})$ for all $\alpha \in \mathbb{C}$.
\item[(iii)] If $A\in GCSI(\mathscr{H})$ is invertible, then $A^{-1}\in GCSI(\mathscr{H})$.
\end{itemize}
\end{proposition}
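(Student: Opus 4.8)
The plan is to dispatch the two algebraic items (ii) and (iii) by direct substitution and to reserve the topological item (i) for the end. For (ii), if $\alpha=0$ then $\alpha A=0$ satisfies \eqref{I.D.2.1} trivially, since both sides vanish, so $0\in GCSI(\mathscr{H})$. If $\alpha\neq0$, I would fix $\lambda\in(0,1)$ witnessing $A\in GCSI(\mathscr{H})$ and pull the scalar through: from $\langle\alpha Ax,y\rangle=\alpha\langle Ax,y\rangle$, $\|\alpha Ax\|=|\alpha|\|Ax\|$ and $\|\alpha Ay\|=|\alpha|\|Ay\|$, the inequality \eqref{I.D.2.1} for $\alpha A$ becomes $|\alpha|\|\langle Ax,y\rangle\|\leq|\alpha|^{\lambda}|\alpha|^{1-\lambda}(\|Ax\|\|y\|)^{\lambda}(\|Ay\|\|x\|)^{1-\lambda}$, and cancelling $|\alpha|=|\alpha|^{\lambda}|\alpha|^{1-\lambda}$ returns \eqref{I.D.2.1} for $A$. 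Thus $\alpha A\in GCSI(\mathscr{H})$ with the same $\lambda$.

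For (iii), I would first note that, $A$ being invertible in the $C^*$-algebra $\mathcal{L}(\mathscr{H})$, its inverse $A^{-1}$ is again adjointable. Substituting $x\mapsto A^{-1}x$ and $y\mapsto A^{-1}y$ into \eqref{I.D.2.1} for $A$ collapses $AA^{-1}$ to the identity and gives $\|\langle x,A^{-1}y\rangle\|\leq(\|x\|\|A^{-1}y\|)^{\lambda}(\|y\|\|A^{-1}x\|)^{1-\lambda}$. The one observation needed is the isometry of the involution, which yields $\|\langle x,A^{-1}y\rangle\|=\|\langle A^{-1}y,x\rangle\|$ from $\langle x,A^{-1}y\rangle=\langle A^{-1}y,x\rangle^{*}$; after interchanging the names of $x$ and $y$ this is exactly \eqref{I.D.2.1} for $A^{-1}$, again with the same $\lambda$, so $A^{-1}\in GCSI(\mathscr{H})$.

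The substantive item is (i). I would take a net $(A_i)$ in $GCSI(\mathscr{H})$ converging strongly to $A\in\mathcal{L}(\mathscr{H})$, so that $A_ix\to Ax$ for each $x$; then $\|A_ix\|\to\|Ax\|$, and from $\|\langle A_ix-Ax,y\rangle\|\leq\|A_ix-Ax\|\|y\|$ also $\|\langle A_ix,y\rangle\|\to\|\langle Ax,y\rangle\|$. Each $A_i$ carries a parameter $\lambda_i\in(0,1)$, and by compactness of $[0,1]$ I would pass to a subnet with $\lambda_i\to\lambda_0\in[0,1]$. Passing to the limit in $\|\langle A_ix,y\rangle\|\leq(\|A_ix\|\|y\|)^{\lambda_i}(\|A_iy\|\|x\|)^{1-\lambda_i}$—with the boundary situations $\|Ax\|=0$ or $\|Ay\|=0$ controlled by the elementary inequality $s^{\mu}t^{1-\mu}\leq\mu s+(1-\mu)t$—yields $\|\langle Ax,y\rangle\|\leq(\|Ax\|\|y\|)^{\lambda_0}(\|Ay\|\|x\|)^{1-\lambda_0}$ for all $x,y$ when $\lambda_0\in(0,1)$, and the weaker estimate $\|\langle Ax,y\rangle\|\leq\|Ay\|\|x\|$ when $\lambda_0=0$. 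In the former case $A\in GCSI(\mathscr{H})$ at once; in the latter, multiplying this estimate raised to the power $1-\mu$ by the Cauchy--Schwarz inequality \eqref{I.1.1} (in the form $\|\langle Ax,y\rangle\|\leq\|Ax\|\|y\|$) raised to the power $\mu$ recovers \eqref{I.D.2.1} for every $\mu\in(0,1)$.

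The main obstacle is the degenerate case $\lambda_0=1$, in which the limiting inequality collapses to the ever-true Cauchy--Schwarz bound $\|\langle Ax,y\rangle\|\leq\|Ax\|\|y\|$ and so carries no information. To forestall it I would not choose the $\lambda_i$ arbitrarily but rather near the least admissible value $\mu_i:=\inf\{\lambda\in(0,1):A_i\text{ satisfies }\eqref{I.D.2.1}\text{ with }\lambda\}$, which is meaningful because (as recorded in the remark showing that \eqref{I.D.2.1} for $\lambda$ forces \eqref{I.D.2.1} for every larger $\lambda'\in(\lambda,1)$) the admissible parameters fill an interval of the form $(\mu_i,1)$. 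If $\limsup_i\mu_i<1$ one extracts a subnet with $\lambda_0<1$ and finishes as above; the genuinely delicate scenario is $\mu_i\to1$, and I expect the heart of the proof to lie in ruling this out under strong convergence, that is, in showing that the least-admissible-parameter functional cannot jump to $1$ in the strong limit.
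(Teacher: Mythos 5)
Your arguments for (ii) and (iii) are correct and complete: the homogeneity $|\alpha|=|\alpha|^{\lambda}|\alpha|^{1-\lambda}$ settles (ii) (with the zero operator handled trivially), and the substitution $x\mapsto A^{-1}x$, $y\mapsto A^{-1}y$ combined with $\|\langle u,v\rangle\|=\|\langle v,u\rangle\|$ and a relabelling of $x$ and $y$ settles (iii), both with the same $\lambda$. For what it is worth, the paper offers no proof of any part of this proposition --- it is announced only as a list of ``evident properties'' deduced from Definition \ref{D.2.1} --- so there is no argument of the author's to compare against; your write-up supplies more than the paper does.

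The genuine gap is the one you flag yourself in part (i). For a fixed $\lambda$, the set $GCSI_{\lambda}(\mathscr{H})$ of operators satisfying \eqref{I.D.2.1} with that particular $\lambda$ is strongly closed by exactly your limiting argument (strong convergence gives $\|A_ix\|\to\|Ax\|$ and $\|\langle A_ix,y\rangle\|\to\|\langle Ax,y\rangle\|$, and the boundary cases are controlled as you indicate). By the remark following Definition \ref{D.2.1} these sets increase with $\lambda$, so $GCSI(\mathscr{H})=\bigcup_{n}GCSI_{1-1/n}(\mathscr{H})$ is an increasing union of strongly closed sets, and such a union need not be closed. Your compactness extraction genuinely breaks down when the limit parameter is $\lambda_0=1$: the limiting inequality degenerates to the unconditional Cauchy--Schwarz bound $\|\langle Ax,y\rangle\|\leq\|Ax\|\|y\|$ and certifies nothing. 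You correctly reduce the whole problem to showing that the optimal parameters $\mu_i$ cannot tend to $1$ along a strongly convergent net, but you do not prove this, and the paper gives no hint of how to do it either. As it stands, part (i) remains unproven, and absent an argument excluding the scenario $\mu_i\to 1$ one should even be cautious about whether the closedness claim is true as stated.
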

The following auxiliary lemmas are needed for our investigation.
\begin{lemma}\cite[Theorem 3.2]{Lan}\label{L.2.1}
Let $A\in \mathcal{L}(\mathscr{H})$. Then
\begin{itemize}
\item[(i)] $\mathcal{R}(A)$ is closed if and only if $\mathcal{R}(A^*)$ is closed, and in this case, $\mathcal{R}(A)$
and $\mathcal{R}(A^*)$ are orthogonally complemented with $\mathcal{R}(A) = {\mathcal{N}(A^*)}^{\perp}$ and
$\mathcal{R}(A^*) = {\mathcal{N}(A)}^{\perp}$.
\item[(ii)] $\mathcal{N}(A) = \mathcal{N}(|A|)$, $\mathcal{N}(A^*) = {\mathcal{R}(A)}^{\perp}$ and
${\mathcal{N}(A^*)}^{\perp} = {\mathcal{R}(A)}^{\perp\perp} \supseteq \overline{\mathcal{R}(A)}$.
\end{itemize}
\end{lemma}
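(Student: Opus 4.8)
The plan is to establish (ii) first, since it is elementary and feeds directly into (i), and then to reduce (i) to a spectral statement about positive operators. For (ii), the identity $\mathcal{N}(A)=\mathcal{N}(|A|)$ follows from the $\mathscr{A}$-valued computation $\langle Ax,Ax\rangle=\langle x,A^*Ax\rangle=\langle x,|A|^2x\rangle=\langle|A|x,|A|x\rangle$ valid for every $x\in\mathscr{H}$; since $\langle z,z\rangle=0$ forces $z=0$, this gives $Ax=0\iff|A|x=0$. For $\mathcal{N}(A^*)=\mathcal{R}(A)^{\perp}$, note that $y\perp\mathcal{R}(A)$ says $\langle A^*y,x\rangle=\langle y,Ax\rangle=0$ for all $x$, and nondegeneracy of the inner product yields $A^*y=0$; the reverse inclusion is immediate. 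The remaining relation $\mathcal{N}(A^*)^{\perp}=\mathcal{R}(A)^{\perp\perp}\supseteq\overline{\mathcal{R}(A)}$ then comes from taking orthogonal complements together with the general facts $S\subseteq S^{\perp\perp}$ and $S^{\perp\perp}$ closed.

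For (i), I would first reduce to a positive operator. Because $\|Az\|=\||A|z\|$ for all $z$ (the inner-product identity above), the assignment $Az\mapsto|A|z$ is a well-defined isometry of $\mathcal{R}(A)$ onto $\mathcal{R}(|A|)$ which extends to an isometric isomorphism of their closures; as isometric isomorphisms preserve closedness, $\mathcal{R}(A)$ is closed iff $\mathcal{R}(|A|)$ is, and likewise $\mathcal{R}(A^*)$ is closed iff $\mathcal{R}(|A^*|)$ is. The crux is then the spectral dictionary: for $T\in\mathcal{L}(\mathscr{H})_{+}$, the range $\mathcal{R}(T)$ is closed if and only if $0$ is isolated in (or absent from) $\sigma(T)$. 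The easy direction is clean: when $\{0\}$ is clopen in $\sigma(T)$, the function that is $0$ at $0$ and $1/t$ elsewhere is continuous on $\sigma(T)$, so continuous functional calculus yields $S\in\mathcal{L}(\mathscr{H})$ with $ST=TS=1-p$, where $p=\chi_{\{0\}}(T)$ is the spectral projection; consequently $\mathcal{R}(T)=(1-p)\mathscr{H}$ is closed and orthogonally complemented by $p\mathscr{H}=\mathcal{N}(T)$.

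With this dictionary the symmetry and the explicit identifications follow. Since $\sigma(A^*A)\cup\{0\}=\sigma(AA^*)\cup\{0\}$ in the unital $C^*$-algebra $\mathcal{L}(\mathscr{H})$, the point $0$ is isolated in $\sigma(|A|)$ iff it is isolated in $\sigma(|A^*|)$, whence $\mathcal{R}(A)$ is closed iff $\mathcal{R}(A^*)$ is closed. Assuming $\mathcal{R}(A)$ closed, apply the positive-operator result to $|A^*|$: writing $p=\chi_{\{0\}}(|A^*|)$, which projects onto $\mathcal{N}(|A^*|)=\mathcal{N}(A^*)$, one gets $\mathcal{R}(AA^*)=\mathcal{R}(|A^*|^2)=(1-p)\mathscr{H}$. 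Since $\langle Ax,y\rangle=\langle x,A^*y\rangle=0$ whenever $A^*y=0$, we have $\mathcal{R}(A)\subseteq\mathcal{N}(A^*)^{\perp}=(1-p)\mathscr{H}$, and combining this with $\mathcal{R}(AA^*)\subseteq\mathcal{R}(A)$ gives the sandwich $(1-p)\mathscr{H}=\mathcal{R}(AA^*)\subseteq\mathcal{R}(A)\subseteq\mathcal{N}(A^*)^{\perp}=(1-p)\mathscr{H}$. Hence $\mathcal{R}(A)=\mathcal{N}(A^*)^{\perp}$ is closed and orthogonally complemented, and interchanging $A$ and $A^*$ yields $\mathcal{R}(A^*)=\mathcal{N}(A)^{\perp}$.

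The main obstacle is the converse half of the spectral dictionary: that closed range of a positive $T$ forces a spectral gap at $0$. One cannot simply pass to a Borel spectral projection as in the Hilbert space case, since $\mathcal{L}(\mathscr{H})$ carries only continuous functional calculus and submodules need not be complemented, so the usual reduction via $\mathscr{H}=\mathcal{N}(T)\oplus\overline{\mathcal{R}(T)}$ is unavailable and would be circular. I expect to argue by contradiction: if $0$ accumulates in $\sigma(T)$, choose $t_n\in\sigma(T)$ with $t_n\to0$ and continuous bumps $\phi_n$ supported near $t_n$, factor $\phi_n(T)=T\psi_n(T)$ with $\psi_n(t)=\phi_n(t)/t$, and invoke the open mapping theorem (legitimate, as $\mathcal{R}(T)$ is a closed, hence Banach, subspace) to obtain uniformly bounded preimages. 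The contradiction should then come from confronting this uniform bound against the blow-up of $T^{-1}$ on the spectral region shrinking to $0$, which is precisely where the real work concentrates.
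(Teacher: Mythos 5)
First, a point of comparison: the paper offers no proof of this lemma at all — it is quoted verbatim from Lance's book as \cite[Theorem 3.2]{Lan} — so there is no internal argument to measure yours against. Judged on its own terms, your part (ii) is complete and correct, and your overall strategy for part (i) — reduce to $|A|$ via the isometry $Az\mapsto|A|z$, characterize closed range of a positive operator by a spectral gap at $0$, transfer the gap between $\sigma(A^*A)$ and $\sigma(AA^*)$, and then pin down $\mathcal{R}(A)=(1-p)\mathscr{H}=\mathcal{N}(A^*)^{\perp}$ by the sandwich $\mathcal{R}(AA^*)\subseteq\mathcal{R}(A)\subseteq\mathcal{N}(A^*)^{\perp}$ — is sound and is in fact close in spirit to Lance's own argument. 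The forward half of your ``spectral dictionary'' (gap at $0$ implies closed, orthogonally complemented range) is correctly proved via continuous functional calculus.

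The genuine gap is the one you flag yourself: the implication ``$\mathcal{R}(T)$ closed $\Rightarrow$ $0$ is isolated in or absent from $\sigma(T)$'' for positive $T$ is only a declared intention, yet every nontrivial conclusion of part (i) funnels through it — including the equivalence of closedness for $\mathcal{R}(A)$ and $\mathcal{R}(A^*)$, and the sandwich step, which silently assumes the gap for $AA^*$. The sketch can be completed, but not quite along the lines you indicate: rather than hunting for uniformly bounded preimages and a blow-up of $T^{-1}$, apply the open mapping theorem in its quotient form to the surjection $T:\mathscr{H}\to\mathcal{R}(T)$ to get $c>0$ with $\|Tx\|\geq c\,\mathrm{dist}(x,\mathcal{N}(T))$ for all $x$. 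If $t_n\in\sigma(T)$, $t_n>0$, $t_n\to 0$, choose bumps $\phi_n$ with $0\leq\phi_n\leq 1$, $\phi_n(t_n)=1$, supported in $(t_n/2,2t_n)$, and unit vectors $z_n$ with $\|\phi_n(T)^2z_n\|\geq 1/2$. Writing $x_n=\phi_n(T)z_n$, one has $\phi_n(T)=\psi_n(T)T$ with $\psi_n(t)=\phi_n(t)/t$ continuous, so $\phi_n(T)$ annihilates $\mathcal{N}(T)$ and has norm at most $1$; hence $\mathrm{dist}(x_n,\mathcal{N}(T))\geq\|\phi_n(T)x_n\|\geq 1/2$, while $\|Tx_n\|\leq\sup_{t\in\sigma(T)}t\phi_n(t)\leq 2t_n\to 0$, contradicting the lower bound. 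Until some such argument is supplied, the proof is incomplete at precisely its central point.
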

\begin{lemma}\label{L.2.2}
Let $A\in \mathcal{L}(\mathscr{H})_{+}$. Then
$\mathcal{N}(A^t) = \mathcal{N}(A)$ for any $t> 0$.
\end{lemma}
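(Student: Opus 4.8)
The plan is to reduce the two set inclusions to a single claim and then invoke the continuous functional calculus for the positive operator $A$. The claim is: \emph{if $B \in \mathcal{L}(\mathscr{H})_{+}$ and $s > 0$, then $Bx = 0$ implies $B^{s} x = 0$.} Granting this claim, the lemma follows at once. Indeed, applying it with $B = A$ and $s = t$ gives $\mathcal{N}(A) \subseteq \mathcal{N}(A^{t})$. For the reverse inclusion, note that $A^{t} \in \mathcal{L}(\mathscr{H})_{+}$ and that the functional calculus yields $(A^{t})^{1/t} = A$; hence applying the claim with $B = A^{t}$ and $s = 1/t$ gives $\mathcal{N}(A^{t}) \subseteq \mathcal{N}(A)$. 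Combining the two inclusions proves $\mathcal{N}(A^{t}) = \mathcal{N}(A)$.

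To prove the claim I would argue by polynomial approximation. Put $M = \|B\|$, so that $\sigma(B) \subseteq [0, M]$, and consider the continuous function $u \mapsto u^{s}$ on $[0, M]$, which vanishes at $u = 0$ since $s > 0$. By the Weierstrass approximation theorem there exist polynomials $p_{n}$ with $p_{n} \to u^{s}$ uniformly on $[0, M]$. Setting $q_{n}(u) = p_{n}(u) - p_{n}(0)$ and using $p_{n}(0) \to 0^{s} = 0$, we still have $q_{n} \to u^{s}$ uniformly, while now $q_{n}(0) = 0$; consequently each $q_{n}$ factors as $q_{n}(u) = u\, r_{n}(u)$ for a polynomial $r_{n}$. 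Since the continuous functional calculus is an isometric $*$-homomorphism, $\|q_{n}(B) - B^{s}\| = \sup_{u \in \sigma(B)} |q_{n}(u) - u^{s}| \leq \sup_{u \in [0, M]} |q_{n}(u) - u^{s}| \to 0$. On the other hand $q_{n}(B) = B\, r_{n}(B) = r_{n}(B)\, B$, so that $q_{n}(B) x = r_{n}(B)(Bx) = 0$ whenever $Bx = 0$. Passing to the limit, $B^{s} x = \lim_{n} q_{n}(B) x = 0$, which is the claim.

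The only delicate point is the behaviour at the origin: because $s$ need not be an integer, one cannot simply write $B^{s} = B^{s-1} B$, and the factor $u^{s}$ must be handled through the approximating polynomials. This is exactly why we arrange the $q_{n}$ to vanish at $0$, so that each $q_{n}(B)$ carries a factor of $B$ and therefore annihilates $\mathcal{N}(B)$; the isometry of the functional calculus then transfers the norm convergence $q_{n}(B) \to B^{s}$ to the convergence $q_{n}(B) x \to B^{s} x$ in $\mathscr{H}$ for the fixed vector $x$. For comparison, one could instead first establish $\mathcal{N}(A) = \mathcal{N}(A^{1/2})$ from the identity $\langle A^{1/2}x, A^{1/2}x\rangle = \langle Ax, x\rangle$, iterate to dyadic exponents, and then pass to a general $t > 0$ by a monotonicity or continuity argument; but the polynomial-approximation route above treats every $t > 0$ uniformly and avoids these intermediate steps.
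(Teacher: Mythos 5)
Your proof is correct, but it takes a genuinely different route from the paper's. The paper handles the case $t\in(0,1)$ by citing an external result (\cite[Lemma 2.2]{V.M.X}) and then disposes of $t\geq 1$ by the substitution $B=A^{t}$, so that $\mathcal{N}(A)=\mathcal{N}(B^{1/t})=\mathcal{N}(B)=\mathcal{N}(A^{t})$; the entire analytic content is delegated to the citation. You instead prove the one-sided claim $Bx=0\Rightarrow B^{s}x=0$ for every $s>0$ from scratch, by approximating $u\mapsto u^{s}$ uniformly on $[0,\|B\|]$ with polynomials normalized to vanish at the origin, so that each approximant factors through $B$ and annihilates $\mathcal{N}(B)$, and then you obtain both inclusions by applying the claim to $A$ with exponent $t$ and to $A^{t}$ with exponent $1/t$ (the latter step being the same reciprocal-exponent trick the paper uses for $t\geq1$). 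Your argument is sound: the isometry $\|f(B)\|=\sup_{u\in\sigma(B)}|f(u)|$ of the continuous functional calculus, the identity $(A^{t})^{1/t}=A$, and the factorization $q_{n}(u)=u\,r_{n}(u)$ are all legitimate, and $\mathcal{L}(\mathscr{H})$ is unital so $r_{n}(B)$ makes sense even when $r_{n}$ has a constant term. What your approach buys is a self-contained proof that treats every $t>0$ uniformly and makes explicit the only delicate point (the behaviour of $u^{s}$ at the origin for non-integer $s$); what the paper's approach buys is brevity, at the cost of depending on the external lemma.
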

\begin{proof}
For every $t\in (0, 1)$, it is proved in \cite[Lemma 2.2]{V.M.X} that $\mathcal{N}(A^t) = \mathcal{N}(A)$.
Now, let $t\geq 1$. Put $B = A^t$. Then $B\in \mathcal{L}(\mathscr{H})_{+}$ and so $\mathcal{N}\big(B^{1/t}\big) = \mathcal{N}(B)$.
Thus
\begin{align*}
\mathcal{N}(A) = \mathcal{N}\big(B^{1/t}\big) = \mathcal{N}(B) = \mathcal{N}(A^t).
\end{align*}
\end{proof}
Here we present one of our main results.
\begin{theorem}\label{T.2.5}
Let $A\in GCSI(\mathscr{H})$.
Then $\mathcal{N}(A) = \mathcal{N}(A^2)$.
\end{theorem}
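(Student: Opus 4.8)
The plan is to prove the two inclusions separately. The inclusion $\mathcal{N}(A) \subseteq \mathcal{N}(A^2)$ is automatic and requires no hypothesis on $A$: if $Ax = 0$, then $A^2 x = A(Ax) = 0$. The content of the theorem therefore lies entirely in the reverse inclusion $\mathcal{N}(A^2) \subseteq \mathcal{N}(A)$, and this is where the generalized Cauchy--Schwarz inequality must be used.

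To establish the reverse inclusion, I would fix $x \in \mathcal{N}(A^2)$, so that $A^2 x = 0$, and then exploit the freedom in the second variable of \eqref{I.D.2.1} by choosing $y = Ax$. Since $A \in GCSI(\mathscr{H})$, there is some $\lambda \in (0,1)$ for which \eqref{I.D.2.1} holds, and this particular substitution yields
\begin{align*}
\|\langle Ax, Ax\rangle\| \leq (\|Ax\|\|Ax\|)^{\lambda}(\|A(Ax)\|\|x\|)^{1-\lambda}.
\end{align*}
The two observations that drive the argument are the module norm identity $\|\langle Ax, Ax\rangle\| = \|Ax\|^2$, which turns the left-hand side into $\|Ax\|^2$, and the fact that $\|A(Ax)\| = \|A^2 x\| = 0$, which forces the right-hand side to vanish. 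Here it is essential that the exponent $1 - \lambda$ be strictly positive, and this is guaranteed precisely by $\lambda \in (0,1)$.

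Combining these, I obtain $\|Ax\|^2 \leq 0$, whence $Ax = 0$ and $x \in \mathcal{N}(A)$. This gives $\mathcal{N}(A^2) \subseteq \mathcal{N}(A)$ and, together with the trivial inclusion, completes the proof. I do not anticipate a genuine obstacle: the only real idea is the substitution $y = Ax$, which collapses the defining inequality into a statement about $\|Ax\|$ alone. Once that substitution is made, the conclusion is immediate from the norm identity and the strict positivity of $1 - \lambda$. It is worth noting that the argument uses only the defining inequality with a single well-chosen pair $(x, Ax)$, so no appeal to the auxiliary lemmas on positive operators or ranges is needed for this statement.
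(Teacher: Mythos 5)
Your proof is correct. The trivial inclusion $\mathcal{N}(A)\subseteq\mathcal{N}(A^2)$ needs no comment, and for the reverse inclusion your substitution works exactly as you describe: with $y=Ax$ the defining inequality gives $\|Ax\|^2=\|\langle Ax,Ax\rangle\|\leq(\|Ax\|^2)^{\lambda}\big(\|A^2x\|\,\|x\|\big)^{1-\lambda}=0$, using the $C^*$-module norm identity and the fact that $1-\lambda>0$ makes the vanishing factor decisive.

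The key idea is the same as in the paper --- plug $Az$ into the second slot so that the factor $\|Ay\|^{1-\lambda}=\|A^2z\|^{1-\lambda}$ annihilates the right-hand side --- but your choice of first argument is genuinely simpler. The paper applies \eqref{I.D.2.1} with the pair $\big(|A|^2z,\;Az\big)$, obtaining $\big\||A|^2z\big\|^2=\big\|\langle A|A|^2z,Az\rangle\big\|\leq 0$, and then must still pass from $|A|^2z=0$ to $Az=0$; this requires Lemma \ref{L.2.2} (that $\mathcal{N}(B^t)=\mathcal{N}(B)$ for positive $B$) together with Lemma \ref{L.2.1}(ii) (that $\mathcal{N}(|A|)=\mathcal{N}(A)$). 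Your choice of the pair $(z,\;Az)$ delivers $\|Az\|^2\leq 0$ directly and dispenses with both auxiliary lemmas, so your observation that no appeal to the lemmas on positive operators is needed is accurate: your argument is a strict simplification of the paper's, at no loss of generality.
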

\begin{proof}
Since $A\in GCSI(\mathscr{H})$, there exists $\lambda \in (0, 1)$ such that (\ref{I.D.2.1}) holds.
Now, assume that $z\in\mathcal{N}(A^2)$.
Thus $A^2z = 0$. Applying (\ref{I.D.2.1}) with $x= |A|^2z$ and $y = Az$, we get
\begin{align*}
\big\||A|^2z\big\|^2 = \big\|\langle A|A|^2z, Az\rangle\big\|
\leq \big(\big\|A|A|^2z\big\|\|Az\|\big)^{\lambda}\big(\|A^2z\|\big\||A|^2z\big\|\big)^{1 - \lambda} = 0.
\end{align*}
Hence $|A|^2z = 0$, or equivalently, $z\in \mathcal{N}(|A|^2)$.
Since, by Lemma \ref{L.2.2} and Lemma \ref{L.2.1} (ii), $\mathcal{N}(|A|^2) = \mathcal{N}(|A|) = \mathcal{N}(A)$ we reach $z\in \mathcal{N}(A)$.
Thus $\mathcal{N}(A^2) \subseteq \mathcal{N}(A)$.
Also, since $\mathcal{N}(A) \subseteq \mathcal{N}(A^2)$ we therefore conclude that $\mathcal{N}(A) = \mathcal{N}(A^2)$.
\end{proof}
\begin{corollary}\label{C.2.6.2}
Let $A^*\in GCSI(\mathscr{H})$.
Then ${\mathcal{R}(A)}^{\perp} = {\mathcal{R}(A^2)}^{\perp}$.
\end{corollary}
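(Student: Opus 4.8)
The plan is to deduce this directly from Theorem \ref{T.2.5} by applying it to the operator $A^*$ and then translating the resulting null-space identity into a statement about orthogonal complements of ranges via Lemma \ref{L.2.1}~(ii). The key algebraic observation is that taking adjoints commutes with squaring, so $(A^*)^2 = (A^2)^*$, which is what allows the square of $A^*$ to be rewritten in terms of $A^2$.

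First I would invoke the hypothesis $A^*\in GCSI(\mathscr{H})$ and apply Theorem \ref{T.2.5} with $A$ replaced by $A^*$. This yields
\begin{align*}
\mathcal{N}(A^*) = \mathcal{N}\big((A^*)^2\big).
\end{align*}
Next I would use the identity $(A^*)^2 = A^*A^* = (AA)^* = (A^2)^*$ to rewrite the right-hand side, obtaining $\mathcal{N}(A^*) = \mathcal{N}\big((A^2)^*\big)$.

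Finally I would convert both null spaces into orthogonal complements of ranges using the relation $\mathcal{N}(B^*) = {\mathcal{R}(B)}^{\perp}$ from Lemma \ref{L.2.1}~(ii). Taking $B = A$ gives $\mathcal{N}(A^*) = {\mathcal{R}(A)}^{\perp}$, and taking $B = A^2$ gives $\mathcal{N}\big((A^2)^*\big) = {\mathcal{R}(A^2)}^{\perp}$. Chaining these equalities produces
\begin{align*}
{\mathcal{R}(A)}^{\perp} = \mathcal{N}(A^*) = \mathcal{N}\big((A^2)^*\big) = {\mathcal{R}(A^2)}^{\perp},
\end{align*}
which is the desired conclusion.

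There is no substantial obstacle here: the statement is essentially the dual form of Theorem \ref{T.2.5}, and the only points requiring care are the correct use of $(A^*)^2 = (A^2)^*$ and the precise form of the range--null-space duality in Lemma \ref{L.2.1}~(ii). Note in particular that this argument does not require $\mathcal{R}(A)$ or $\mathcal{R}(A^2)$ to be closed, since the relation $\mathcal{N}(B^*) = {\mathcal{R}(B)}^{\perp}$ from Lemma \ref{L.2.1}~(ii) holds for every adjointable operator $B$.
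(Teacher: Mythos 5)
Your argument is correct and coincides with the paper's own proof: both apply Theorem \ref{T.2.5} to $A^*$, use $(A^*)^2=(A^2)^*$, and translate via $\mathcal{N}(B^*)=\mathcal{R}(B)^{\perp}$ from Lemma \ref{L.2.1}~(ii), yielding the identical chain of equalities.
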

\begin{proof}
Since $A^*\in GCSI(\mathscr{H})$, by Theorem \ref{T.2.5} and Lemma \ref{L.2.1} (ii), we have
\begin{align*}
{\mathcal{R}(A)}^{\perp} = \mathcal{N}(A^*) = \mathcal{N}\big((A^*)^2\big) = \mathcal{N}\big((A^2)^*\big) = {\mathcal{R}(A^2)}^{\perp}.
\end{align*}
\end{proof}
\begin{remark}\label{R.2.7}
We remark that the converse of Theorem \ref{T.2.5} is not true, in general.
For example, let $A\,:\ell^2(\mathbb{Z})\longrightarrow \ell^2(\mathbb{Z})$
be the weighted bilateral shift defined by
$Ae_n = \alpha_{n}e_{n+1}$ for all $n\in \mathbb{Z}$, where
\begin{align*}
\alpha_n : =\left\{\begin{array}{ll}
1 & n\neq 2\\
\frac{1}{2} & n=2.\end{array}\right.
\end{align*}
Then it is easy to see that $\mathcal{N}(A) = \mathcal{N}(A^2) = \{0\}$, but $A\notin GCSI\big(\ell^2(\mathbb{Z})\big)$.
\end{remark}
\begin{remark}\label{R.2.8}
For an arbitrary operator $A\in GCSI(\mathscr{H})$
we have $\mathcal{N}(A) \subseteq \mathcal{N}(A^*)$.
Indeed, if $y\in\mathcal{N}(A)$, then
\begin{align*}
\|A^*y\|^2 = \|\langle AA^*y, y\rangle\| \leq
(\|AA^*y\|\|y\|)^{\lambda}(\|Ay\|\|A^*y\|)^{1 - \lambda} = 0,
\end{align*}
for some $\lambda \in (0, 1)$.
Hence $A^*y = 0$, or equivalently, $y\in \mathcal{N}(A^*)$.
\end{remark}
The following result is well-known (see \cite{Ka}).
\begin{lemma}\label{L.2.6.1}
Let $X$ and $Y$ be Banach spaces, and let $A\in \mathcal{B}(X, Y)$.
Suppose $M$ is a closed linear subspace of $Y$ such that $\mathcal{R}(A)\cap M =\{0\}$
and $\mathcal{R}(A) + M$ is closed in $Y$. Then $\mathcal{R}(A)$ is closed in $Y$.
\end{lemma}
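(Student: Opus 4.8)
The plan is to reduce the statement to the open mapping theorem by passing to the quotient Banach space $Y/M$. Since $M$ is closed in the Banach space $Y$, the quotient $Y/M$ is again a Banach space; I would write $q\colon Y\to Y/M$ for the canonical projection and set $B:=q\circ A\in\mathcal{B}(X,Y/M)$. The whole argument hinges on understanding the kernel and range of $B$.

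First I would compute $\mathcal{N}(B)$. If $Bx=0$ then $Ax\in M$, and since also $Ax\in\mathcal{R}(A)$, the hypothesis $\mathcal{R}(A)\cap M=\{0\}$ forces $Ax=0$; conversely $\mathcal{N}(A)\subseteq\mathcal{N}(B)$ is clear, so $\mathcal{N}(B)=\mathcal{N}(A)$. Next I would identify $\mathcal{R}(B)$: because $q$ annihilates $M$, we have $\mathcal{R}(B)=q(\mathcal{R}(A))=q(\mathcal{R}(A)+M)=(\mathcal{R}(A)+M)/M$. Here the hypothesis that $\mathcal{R}(A)+M$ is closed (and contains $M$) is used to conclude that $(\mathcal{R}(A)+M)/M$ is a closed subspace of $Y/M$, hence itself a Banach space. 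Thus $B$ is a bounded operator with closed range.

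With this in hand, the induced map $\widetilde{B}\colon X/\mathcal{N}(A)\to(\mathcal{R}(A)+M)/M$ is a continuous bijection between Banach spaces, so by the open mapping (bounded inverse) theorem it is a topological isomorphism. In particular $B$ is bounded below modulo its kernel: there is a constant $c>0$ with $\|q(Ax)\|\ge c\,\mathrm{dist}(x,\mathcal{N}(A))$ for all $x\in X$, and since $\|q(Ax)\|=\mathrm{dist}(Ax,M)\le\|Ax\|$ this reads $\mathrm{dist}(Ax,M)\ge c\,\mathrm{dist}(x,\mathcal{N}(A))$.

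Finally I would deduce closedness of $\mathcal{R}(A)$ from completeness. Given $Ax_k\to y$ in $Y$, the sequence $(Ax_k)$ is Cauchy, so $\mathrm{dist}\big(A(x_k-x_j),M\big)\le\|A(x_k-x_j)\|\to0$, whence $\mathrm{dist}(x_k-x_j,\mathcal{N}(A))\to0$; that is, $([x_k])$ is Cauchy in the complete space $X/\mathcal{N}(A)$ and converges to some $[x]$. Continuity of the map $X/\mathcal{N}(A)\to Y$ induced by $A$ then gives $Ax_k\to Ax$, so $y=Ax\in\mathcal{R}(A)$. The main obstacle is the middle step: one must legitimately invoke the open mapping theorem, which requires checking that every space involved is complete---especially that $(\mathcal{R}(A)+M)/M$ is genuinely closed in $Y/M$, the single place where the closedness hypothesis on $\mathcal{R}(A)+M$ enters. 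The hypothesis $\mathcal{R}(A)\cap M=\{0\}$ plays the complementary role of pinning $\mathcal{N}(B)$ down to $\mathcal{N}(A)$ and of transferring the conclusion back to $\mathcal{R}(A)$ in the final step.
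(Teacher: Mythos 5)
The paper offers no proof of this lemma at all: it is quoted as a known result with a reference to Kato, so there is nothing to compare against. Your quotient argument is correct and complete, and it is in fact the standard proof: passing to $B=q\circ A\colon X\to Y/M$, the hypotheses exactly guarantee that $\mathcal{N}(B)=\mathcal{N}(A)$ and that $\mathcal{R}(B)=(\mathcal{R}(A)+M)/M$ is a Banach space, so the open mapping theorem yields the lower bound $\|Ax\|\ge \mathrm{dist}(Ax,M)\ge c\,\mathrm{dist}(x,\mathcal{N}(A))$, from which closedness of $\mathcal{R}(A)$ follows by the completeness argument you give. All the completeness checks you flag ($X/\mathcal{N}(A)$ Banach since $\mathcal{N}(A)$ is closed, $(\mathcal{R}(A)+M)/M$ closed because its preimage under $q$ is $\mathcal{R}(A)+M$) do go through, so there is no gap.
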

\begin{theorem}\label{P.2.6}
Let $A\in GCSI(\mathscr{H})$.
If $\mathcal{R}(A) + \mathcal{N}(A)$ is closed, then $\mathcal{R}(A)$ is closed.
\end{theorem}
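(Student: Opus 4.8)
The plan is to apply the abstract range-closure criterion of Lemma \ref{L.2.6.1} with $X = Y = \mathscr{H}$ and the closed subspace $M = \mathcal{N}(A)$. Note first that $\mathcal{N}(A)$ is automatically a closed linear subspace of $\mathscr{H}$, since the adjointable operator $A$ is bounded. The hypothesis already supplies the closedness of $\mathcal{R}(A) + \mathcal{N}(A)$. Thus, to conclude from Lemma \ref{L.2.6.1} that $\mathcal{R}(A)$ is closed, the only remaining condition to verify is the transversality $\mathcal{R}(A) \cap \mathcal{N}(A) = \{0\}$.

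This transversality is the crux of the argument, and it is here that the membership $A \in GCSI(\mathscr{H})$ is used, via Theorem \ref{T.2.5}. Suppose $z \in \mathcal{R}(A) \cap \mathcal{N}(A)$, and write $z = Aw$ for some $w \in \mathscr{H}$. Since $z \in \mathcal{N}(A)$, we have $A^2 w = Az = 0$, so $w \in \mathcal{N}(A^2)$. By Theorem \ref{T.2.5}, $\mathcal{N}(A^2) = \mathcal{N}(A)$, hence $w \in \mathcal{N}(A)$ and therefore $z = Aw = 0$. This proves $\mathcal{R}(A) \cap \mathcal{N}(A) = \{0\}$.

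With all three hypotheses of Lemma \ref{L.2.6.1} in hand, the lemma delivers the closedness of $\mathcal{R}(A)$ at once. I do not expect a genuine obstacle here: the role of the $GCSI$ condition is confined to producing the equality $\mathcal{N}(A) = \mathcal{N}(A^2)$ through Theorem \ref{T.2.5}, after which everything reduces to a direct check of the hypotheses of the abstract lemma. The only minor point worth confirming is that $\mathcal{N}(A)$ genuinely qualifies as the closed subspace $M$, which follows immediately from the continuity of $A$.
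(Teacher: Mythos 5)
Your proof is correct and follows the same skeleton as the paper's: both invoke Lemma \ref{L.2.6.1} with $X=Y=\mathscr{H}$ and $M=\mathcal{N}(A)$, so that everything reduces to the transversality $\mathcal{R}(A)\cap\mathcal{N}(A)=\{0\}$. The only difference lies in how that transversality is obtained. You deduce it from Theorem \ref{T.2.5} via the elementary equivalence between $\mathcal{N}(A^2)\subseteq\mathcal{N}(A)$ and $\mathcal{R}(A)\cap\mathcal{N}(A)=\{0\}$, whereas the paper verifies it directly from the defining inequality: if $y=Ax$ and $Ay=0$, then
\[
\|\langle y, y\rangle\| = \|\langle Ax, y\rangle\| \leq \big(\|Ax\|\|y\|\big)^{\lambda}\big(\|Ay\|\|x\|\big)^{1-\lambda} = 0 ,
\]
so $y=0$. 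Since the proof of Theorem \ref{T.2.5} rests on essentially the same GCSI computation, your route is a legitimate shortcut that simply reuses an already-established result; the two arguments are ultimately equivalent, and yours has the mild advantage of not repeating the computation.
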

\begin{proof}
By Lemma \ref{L.2.6.1}, it suffices to show that $\mathcal{R}(A) \cap \mathcal{N}(A) =\{0\}$.
Since $A\in GCSI(\mathscr{H})$, there exists $\lambda \in (0, 1)$ such that (\ref{I.D.2.1}) holds.
Let $y = Ax$ for some $x\in \mathscr{H}$ such that $Ay=0$. Then
\begin{align*}
\|\langle y, y\rangle\| = \|\langle Ax, y\rangle\| \leq
(\|Ax\|\|y\|)^{\lambda}(\|Ay\|\|x\|)^{1 - \lambda} = 0,
\end{align*}
and hence $y = 0$.
\end{proof}
\begin{remark}\label{R.2.6.1}
Let $A\in GCSI(\mathscr{H})$.
If $\mathcal{R}(A) + \mathcal{N}(A)$ is closed, then, by Theorem \ref{P.2.6} and Lemma \ref{L.2.1} (i),
$\mathcal{R}(A^*)$ is orthogonally complemented with $\mathcal{R}(A^*) = {\mathcal{N}(A)}^{\perp}$.
\end{remark}
Let us quote a result from \cite{F.M.X}.
For any $C^*$-algebra $\mathscr{B}$, let $\mathscr{B}_+$ be the set of all positive elements of $\mathscr{B}$.
\begin{lemma}\cite[Lemma 2.3]{F.M.X}\label{L.2.11.1}
Let $\mathscr{B}$ be a $C^*$-algebra and $a, b \in\mathscr{B}_+$ be such that
$\|a^{1/2}c\| \leq \|b^{1/2}c\|$ for all $c \in\mathscr{B}_+$. Then $a \leq b$.
\end{lemma}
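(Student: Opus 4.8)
The plan is to deduce $a \le b$ from the single operator-norm estimate $\|a^{1/2}(b+\varepsilon)^{-1/2}\| \le 1$, obtained by feeding a well-chosen $c$ into the hypothesis. First I would reduce the claim to showing $a \le b + \varepsilon$ for every $\varepsilon > 0$; since the positive cone is norm-closed and $b+\varepsilon \to b$ as $\varepsilon \to 0^+$, this suffices. Passing to the unitization $\widetilde{\mathscr{B}}$, the element $b+\varepsilon$ is positive and invertible, so $a \le b + \varepsilon$ is equivalent to $(b+\varepsilon)^{-1/2}a(b+\varepsilon)^{-1/2} \le 1$, and as this element is positive the latter is equivalent to the norm bound $\|a^{1/2}(b+\varepsilon)^{-1/2}\| \le 1$ (apply $\|T^*T\| = \|T\|^2$ to $T = a^{1/2}(b+\varepsilon)^{-1/2}$).

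The core step is to extract this bound from the hypothesis by taking $c = (b+\varepsilon)^{-1/2}$. With this choice the hypothesis would give $\|a^{1/2}(b+\varepsilon)^{-1/2}\| \le \|b^{1/2}(b+\varepsilon)^{-1/2}\|$, and the right-hand side is controlled by the continuous functional calculus, since $\|b^{1/2}(b+\varepsilon)^{-1/2}\|^2 = \|(b+\varepsilon)^{-1/2}b(b+\varepsilon)^{-1/2}\| = \|b(b+\varepsilon)^{-1}\| = \sup_{t\in\sigma(b)} t/(t+\varepsilon) \le 1$. Combining the two, $\|a^{1/2}(b+\varepsilon)^{-1/2}\| \le 1$, exactly what is needed; letting $\varepsilon \to 0^+$ then yields $a \le b$. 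If $\mathscr{B}$ is unital this already finishes the proof, because $(b+\varepsilon)^{-1/2} \in \mathscr{B}_+$ and the hypothesis applies verbatim.

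The main obstacle is precisely that $c = (b+\varepsilon)^{-1/2}$ lies in $\widetilde{\mathscr{B}}_+$ but not in $\mathscr{B}_+$ when $\mathscr{B}$ is non-unital, so the hypothesis cannot be invoked directly. To circumvent this I would approximate $(b+\varepsilon)^{-1/2}$ from inside $\mathscr{B}_+$ by $c_n := u_n(b)(b+\varepsilon)^{-1/2}$, where $u_n \in C_0((0,\infty))$ satisfy $0 \le u_n \le 1$ and $u_n \uparrow 1$; since $t \mapsto u_n(t)(t+\varepsilon)^{-1/2}$ vanishes at $0$, each $c_n$ is a genuine element of $\mathscr{B}_+$. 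Applying the hypothesis to $c_n$ and computing as above gives $\|a^{1/2}c_n\| \le \|b^{1/2}c_n\| = \sup_t (u_n(t)^2\, t/(t+\varepsilon))^{1/2} \le 1$ for every $n$. The delicate point, where the real work lies, is the passage $n \to \infty$: one has $a^{1/2}c_n = a^{1/2}(b+\varepsilon)^{-1/2}u_n(b)$, and $u_n(b)$ converges only to the range projection $p$ of $b$ (strongly, in the bidual), so the limiting inequality controls only the part of $a$ over the range of $b$, that is, it gives $\|p\,(b+\varepsilon)^{-1/2}a(b+\varepsilon)^{-1/2}\,p\| \le 1$. To recover the full bound I would separately deduce, again from the hypothesis, that $a$ does not charge the kernel of $b$, namely $(1-p)a(1-p) = 0$: testing with positive elements of $\mathscr{B}_+$ taken from an approximate unit of the hereditary subalgebra associated with $1-p$ makes $\|b^{1/2}c\| = \|cbc\|^{1/2}$ arbitrarily small, forcing $\|a^{1/2}c\| \to 0$, and a lower-semicontinuity-of-the-norm argument along the strong limit yields $(1-p)a(1-p) = 0$. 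With this in hand the strong limit of $a^{1/2}c_n$ recovers all of $a^{1/2}(b+\varepsilon)^{-1/2}$, the bound $\|a^{1/2}(b+\varepsilon)^{-1/2}\| \le 1$ follows, and $\varepsilon \to 0^+$ completes the proof. I expect this kernel-of-$b$ bookkeeping in the non-unital case to be the only genuinely technical ingredient.
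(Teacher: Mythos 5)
The paper does not actually prove this lemma --- it is imported verbatim from \cite[Lemma 2.3]{F.M.X} --- so I can only judge your argument on its own merits. Your unital argument is correct and is the standard one: $a\le b+\varepsilon$ is equivalent to $\|a^{1/2}(b+\varepsilon)^{-1/2}\|\le 1$, which follows from the hypothesis applied to $c=(b+\varepsilon)^{-1/2}$ together with $\|b^{1/2}(b+\varepsilon)^{-1/2}\|\le 1$, and then $\varepsilon\to 0^+$ using norm-closedness of the positive cone.

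The non-unital case, however, has a genuine gap, and it sits exactly where you locate ``the real work''. Your approximants $c_n=u_n(b)(b+\varepsilon)^{-1/2}$ only give, in the limit, $\|a^{1/2}(b+\varepsilon)^{-1/2}p\|\le 1$ with $p$ the range projection of $b$ in $\mathscr{B}^{**}$, and you then need $(1-p)a(1-p)=0$. Your proposed derivation of this fact --- testing against an approximate unit of the hereditary subalgebra associated with $1-p$ --- does not work: $p$ is an \emph{open} projection, so $1-p$ is \emph{closed}, and the hereditary subalgebra $(1-p)\mathscr{B}^{**}(1-p)\cap\mathscr{B}$ has open support projection $q$ which is in general strictly smaller than $1-p$ (it can even vanish while $1-p\neq 0$). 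Hence this step only yields $a^{1/2}q=0$, not $a^{1/2}(1-p)=0$. For instance, with $\mathscr{B}=C_0((0,2])$ and $b(t)=\max(t-1,0)$ one gets $q=\chi_{(0,1)}$ versus $1-p=\chi_{(0,1]}$, and the behaviour of $a$ at the boundary point $t=1$ is not controlled by your limits; continuity rescues the commutative case, but no such substitute is available in general, and without the $(1-p)$-corner the bound on $p(b+\varepsilon)^{-1/2}a(b+\varepsilon)^{-1/2}p$ does not bound the whole element. A clean fix avoids the range projection altogether: let $(e_i)$ be an approximate unit of $\mathscr{B}$ and put $c_i=\bigl((b+\varepsilon)^{-1/2}e_i^2(b+\varepsilon)^{-1/2}\bigr)^{1/2}\in\mathscr{B}_+$, which is legitimate because $(b+\varepsilon)^{-1/2}e_i\in\mathscr{B}$ ($\mathscr{B}$ being an ideal in $\widetilde{\mathscr{B}}$). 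Then $\|a^{1/2}c_i\|^2=\|a^{1/2}c_i^2a^{1/2}\|=\|a^{1/2}(b+\varepsilon)^{-1/2}e_i\|^2$ and likewise $\|b^{1/2}c_i\|=\|b^{1/2}(b+\varepsilon)^{-1/2}e_i\|\le 1$, so the hypothesis gives $\|a^{1/2}(b+\varepsilon)^{-1/2}e_i\|\le 1$ for every $i$; since $\mathscr{B}$ is essential in $\widetilde{\mathscr{B}}$, one has $\|z\|=\sup_i\|ze_i\|$ for every $z\in\widetilde{\mathscr{B}}$, whence $\|a^{1/2}(b+\varepsilon)^{-1/2}\|\le 1$ and your argument concludes as in the unital case.
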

\begin{theorem}\label{Thno}
Let $A\in \mathcal{L}(\mathscr H)$. If the following conditions hold, then $A$ is normal.
\begin{itemize}
\item[(i)] There is $x\in\mathscr{H}$ with $Ax\neq 0$ such that for every $y\in \mathscr{H}$
the equality in \eqref{I.D.2.1} holds for some $\lambda\in (0, 1)$.
\item[(ii)] $\|\langle Au,v\rangle\|\leq \|Au\|\|v\|$ for all $u, v\in \mathscr{H}$.
\end{itemize}
\end{theorem}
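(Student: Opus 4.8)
The plan is to deduce normality, i.e.\ the operator identity $A^*A = AA^*$, from the single norm identity $\|Ay\| = \|A^*y\|$ for all $y \in \mathscr{H}$, establishing the latter by trapping it between the two hypotheses. The passage from this norm identity to the operator identity is precisely the point at which the Hilbert $C^*$-module setting is more delicate than the Hilbert space one, and I would carry it out through Lemma \ref{L.2.11.1}. Working in the $C^*$-algebra $\mathscr{B} = \mathcal{L}(\mathscr{H})$ with $a = A^*A = |A|^2$ and $b = AA^* = |A^*|^2$, so that $a^{1/2} = |A|$ and $b^{1/2} = |A^*|$, I would first record that $\|Az\| = \||A|z\|$ and $\|A^*z\| = \||A^*|z\|$ for every $z$. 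Hence the identity $\|Ay\| = \|A^*y\|$ for all $y$ upgrades, upon taking the supremum over unit vectors $z$ in $\||A|(cz)\| = \||A^*|(cz)\|$, to $\||A|c\| = \||A^*|c\|$ for every $c \in \mathscr{B}_+$. Applying Lemma \ref{L.2.11.1} in both directions then gives $|A|^2 \le |A^*|^2$ and $|A^*|^2 \le |A|^2$, whence $|A|^2 = |A^*|^2$ and $A$ is normal.

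The first half of the norm identity, namely $\|Ay\| \le \|A^*y\|$, I would extract from the equality hypothesis (i). Normalizing $\|x\| = 1$, I would compare the assumed equality in \eqref{I.D.2.1} with the Cauchy--Schwarz bound $\|\langle Ax, y\rangle\| \le \|Ax\|\|y\|$ of condition (ii): cancelling the common factor $(\|Ax\|\|y\|)^{\lambda}$ leaves $(\|Ay\|\|x\|)^{1-\lambda} \le (\|Ax\|\|y\|)^{1-\lambda}$, that is $\|Ay\| \le \|Ax\|\|y\|$. Feeding this back into the equality together with the adjoint form $\langle Ax, y\rangle = \langle x, A^*y\rangle$ yields the chain
\begin{align*}
\|Ay\| = \|Ay\|^{\lambda}\|Ay\|^{1-\lambda} \le (\|Ax\|\|y\|)^{\lambda}\|Ay\|^{1-\lambda} = \|\langle Ax, y\rangle\| = \|\langle x, A^*y\rangle\| \le \|A^*y\|,
\end{align*}
so that $\|Ay\| \le \|A^*y\|$ for every $y$; in the language of the paper this says that $A$ is cohyponormal, in accordance with the equality result announced in the abstract.

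It remains to produce the reverse inequality $\|A^*y\| \le \|Ay\|$, and this is the step I expect to be the main obstacle, since hypothesis (i) on its own delivers only the one-sided (cohyponormal) estimate above. Here I would lean on condition (ii): reading it through the identity $\langle Au, v\rangle = \langle u, A^*v\rangle$ and passing to the supremum over unit vectors $u$ should furnish the hyponormality estimate $\|A^*y\| \le \|Ay\|$ for all $y$. Combining the two one-sided estimates gives $\|Ay\| = \|A^*y\|$ for every $y$, and the reduction of the first paragraph then finishes the proof. The two features to watch are the genuine role of (ii) --- without it one remains stuck at cohyponormality --- and the $C^*$-module upgrade from vector norms to the operator order, for which Lemma \ref{L.2.11.1} is indispensable.
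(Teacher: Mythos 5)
Your overall strategy is the same as the paper's: extract $\|Ay\|\le\|A^*y\|$ from the equality hypothesis, extract the reverse inequality from (ii), and upgrade the resulting norm identity to $|A|^2=|A^*|^2$ via Lemma \ref{L.2.11.1}. Two of your three steps are fine, and in places cleaner than the paper. Your derivation of $\|Ay\|\le\|A^*y\|$ avoids the paper's division by $\|Az\|\,\|x\|$ (and hence its separate treatment of $z\in\mathcal{N}(A^*)$ and its appeal to Remark \ref{R.2.8}, which strictly speaking is stated only for operators in $GCSI(\mathscr{H})$). Your application of Lemma \ref{L.2.11.1} inside $\mathscr{B}=\mathcal{L}(\mathscr{H})$ with $a=|A|^2$, $b=|A^*|^2$ is a legitimate variant of the paper's, which instead takes $a=\langle|A|^2w,w\rangle$, $b=\langle|A^*|^2w,w\rangle$ in $\mathscr{A}$ for each $w$ and then passes to the operator order; both routes are correct.

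The genuine gap is the step you yourself flagged as the main obstacle, and as written it fails. Condition (ii) in the literal form $\|\langle Au,v\rangle\|\le\|Au\|\,\|v\|$ is exactly the Cauchy--Schwarz inequality \eqref{I.1.1} applied to $Au$ and $v$, hence holds for \emph{every} adjointable operator and carries no information. Your proposed argument --- rewrite it as $\|\langle u,A^*v\rangle\|\le\|Au\|\,\|v\|$ and take the supremum over unit vectors $u$ --- yields only $\|A^*v\|\le\|A\|\,\|v\|$, not $\|A^*v\|\le\|Av\|$, because the right-hand side still contains $\|Au\|$ rather than $\|Av\|$. The paper's own proof asserts the same implication in one line and suffers from the identical problem; the hypothesis must be intended as $\|\langle Au,v\rangle\|\le\|u\|\,\|Av\|$ (the ``reversed'' Cauchy--Schwarz estimate, which is a nontrivial restriction on $A$). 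With that reading the step is immediate: put $u=A^*v$ to get $\|A^*v\|^2=\|\langle AA^*v,v\rangle\|\le\|A^*v\|\,\|Av\|$ and cancel. So you correctly identified where the difficulty sits, but your resolution of it does not close the gap; no supremum argument can, since the literal (ii) is vacuous, and the theorem as literally stated would otherwise collapse onto Theorem \ref{T.2.12} and render the subsequent corollary redundant.
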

\begin{proof}
By (i), we have
\begin{align}\label{I.11.T.2.12}
\|\langle Ax, y\rangle\| = (\|Ax\|\|y\|)^{\lambda}(\|Ay\|\|x\|)^{1 - \lambda} \qquad (y\in \mathscr{H}).
\end{align}
We will show that for any vector $z\in\mathscr{H}$
\begin{align}\label{I.22.T.2.12}
\|Az\| \leq \|A^*z\|.
\end{align}
Note that \eqref{I.22.T.2.12} holds if $z\in \mathcal{N}(A^*)$, since \eqref{I.11.T.2.12} gives that $Az=0$.
So, let us suppose $z\notin \mathcal{N}(A^*)$. By Remark \ref{R.2.8} we know that $z\notin \mathcal{N}(A)$.
From \eqref{I.11.T.2.12} it follows that
\begin{align*}\label{I.33.T.2.12}
\left(\frac{\|\langle Ax, z\rangle\|}{\|Ax\|\|z\|}\right)^{\lambda} \left(\frac{\|\langle Ax, z\rangle\|}{\|Az\|\|x\|}\right)^{1-\lambda} = 1.
\end{align*}
Thus $\frac{\|\langle Ax, z\rangle\|}{\|Az\|\|x\|}\geq 1$, because $\frac{\|\langle Ax, z\rangle\|}{\|Ax\|\|z\|} \leq 1$
by the Cauchy--Schwarz inequality in $\mathscr{H}$. Therefore,
\begin{align*}
\|Az\| \leq \frac{\|\langle Ax, z\rangle\|}{\|x\|} = \frac{\|\langle x, A^*z\rangle\|}{\|x\|} \leq \|A^*z\|.
\end{align*}
Now, let $w \in \mathscr{H}$. Put $a = \langle |A|^2w, w\rangle$ and $b = \langle |A^*|^2w, w\rangle$.
Then $a, b \in \mathscr{A}_+$ and for any $c\in\mathscr{A}_+$, by \eqref{I.22.T.2.12}, we have
\begin{align*}
\|a^{1/2}c\| &= \sqrt{\|cac\|} = \sqrt{\|c\langle |A|^2w, w\rangle c\|}
\\& = \sqrt{\|\langle A(wc), A(wc)\rangle\|} = \|A(wc)\| \leq \|A^*(wc)\| = \|b^{1/2}c\|.
\end{align*}
So, by Lemma \ref{L.2.11.1}, we obtain
\begin{align*}
\langle |A|^2w, w\rangle = a \leq b = \langle |A^*|^2w, w\rangle.
\end{align*}
This implies $|A|^2 \leq |A^*|^2$.
In the next we show that $|A^*|^2\leq |A|^2$.
By use of (ii) we know that
\begin{align*}
\|\langle u,A^*v\rangle\|= \|\langle Au, v\rangle\|\leq \|Au\|\|v\| \qquad (u, v\in \mathscr{H}).
\end{align*}
This gives that $\|A^*v\|\leq \|Av\|$ for every $v\in \mathscr H$. By the same way of proof in the above we get
$|A^*|^2\leq |A|^2$ and the proof is completed.
\end{proof}
By the proof of Theorem \ref{Thno} we get the following result.
\begin{theorem}\label{T.2.12}
Let $A\in \mathcal{L}(\mathscr{H})$. If there is $x\in \mathscr H$ with $Ax\ne 0$
such that for every $y\in \mathscr H$ the equality in \eqref{I.D.2.1} holds
for some $\lambda\in (0,1)$, then $A$ is cohyponormal.
\end{theorem}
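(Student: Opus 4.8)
The plan is to extract exactly the first half of the argument used for Theorem~\ref{Thno}, since cohyponormality of $A$ means precisely $|A|^2 \leq |A^*|^2$, and that very inequality was already derived there from hypothesis (i) alone. First I would fix the vector $x$ with $Ax \neq 0$ supplied by the hypothesis, so that for every $y \in \mathscr{H}$ the equality
\begin{align*}
\|\langle Ax, y\rangle\| = (\|Ax\|\|y\|)^{\lambda}(\|Ay\|\|x\|)^{1 - \lambda}
\end{align*}
holds for the given $\lambda \in (0,1)$.

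The central step is to prove that $\|Az\| \leq \|A^*z\|$ for every $z \in \mathscr{H}$, and I would split into two cases. If $z \in \mathcal{N}(A^*)$, then substituting $y = z$ into the equality forces $Az = 0$, so the estimate is trivial. If $z \notin \mathcal{N}(A^*)$, then by Remark~\ref{R.2.8} we also have $z \notin \mathcal{N}(A)$, so every norm appearing is nonzero; rewriting the equality as a product of two ratios equal to $1$ and using the ordinary Cauchy--Schwarz inequality \eqref{I.1.1} to bound the first ratio by $1$, I would conclude that the second ratio is at least $1$, which rearranges to $\|Az\| \leq \|\langle Ax, z\rangle\|/\|x\| = \|\langle x, A^*z\rangle\|/\|x\| \leq \|A^*z\|$.

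With $\|Az\| \leq \|A^*z\|$ in hand, the final step passes to the operator inequality via Lemma~\ref{L.2.11.1}. For an arbitrary $w \in \mathscr{H}$ I would set $a = \langle |A|^2 w, w\rangle$ and $b = \langle |A^*|^2 w, w\rangle$ in $\mathscr{A}_+$, and verify for each $c \in \mathscr{A}_+$ that $\|a^{1/2}c\| = \|A(wc)\| \leq \|A^*(wc)\| = \|b^{1/2}c\|$, using the identity $\|a^{1/2}c\|^2 = \|c\langle |A|^2 w, w\rangle c\| = \|\langle A(wc), A(wc)\rangle\|$. Lemma~\ref{L.2.11.1} then yields $\langle |A|^2 w, w\rangle \leq \langle |A^*|^2 w, w\rangle$ for all $w$, that is $|A|^2 \leq |A^*|^2$, which is exactly the definition of cohyponormality.

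I do not foresee a genuine obstacle, since the argument is a verbatim restriction of the proof of Theorem~\ref{Thno}: hypothesis (ii) was used there only to obtain the reverse inequality $|A^*|^2 \leq |A|^2$ needed for full normality, and dropping it leaves precisely the one-sided estimate $|A|^2 \leq |A^*|^2$ defining a cohyponormal operator. The only point requiring some care is the case distinction on whether $z$ lies in $\mathcal{N}(A^*)$, which is handled by the equality hypothesis together with Remark~\ref{R.2.8}.
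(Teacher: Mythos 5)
Your proposal is correct and matches the paper exactly: the paper proves Theorem~\ref{T.2.12} simply by observing that the first half of the proof of Theorem~\ref{Thno} (the derivation of $\|Az\|\leq\|A^*z\|$ from hypothesis (i), followed by the passage to $|A|^2\leq|A^*|^2$ via Lemma~\ref{L.2.11.1}) uses only the equality hypothesis and already yields cohyponormality. Your case split on $z\in\mathcal{N}(A^*)$ and the appeal to Remark~\ref{R.2.8} reproduce the paper's argument verbatim.
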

The following example shows that the revers of Theorem \ref{T.2.12} is not true.
\begin{example}
Let $A\,:\ell^2(\mathbb{Z})\longrightarrow \ell^2(\mathbb{Z})$ be the unilateral shift operator. Then $A$ is cohyponormal.
Note there is no $x\in \ell^2(\mathbb{Z})$ with $Ax\neq 0$ such that the equality in \eqref{I.D.2.1} is not valid.
Indeed, let $x=\sum_{i=1}^\infty\alpha_ie_i$ where $\{e_i:i\in \mathbb{Z}\}$ is an orthonormal basis.
If the equality in \eqref{I.D.2.1} holds, then for every $k\in \mathbb{Z}$ there is $\lambda_k\in (0,1)$ such that
\begin{align*}
|\alpha_{k+1}|=\|\langle Ax,e_k\rangle\|=\left(\sum_{i=1}^\infty|\alpha_{i+1}|^2\right)^{\frac{\lambda_k}{2}}
\left(\sum_{i=1}^\infty|\alpha_i|^2\right)^{\frac{1-\lambda_k}{2}}\qquad(k=1,2,...).
\end{align*}
This yields that $\alpha_k=0$ for all $k\geq 2$ and so $x=\alpha_1e_1$. In this case we get $Ax=0$.
\end{example}
As an immediate consequence of Theorem \ref{T.2.12} we have the following result.
\begin{corollary}
Let $A\in \mathcal{L}(\mathscr{H})$ and let $x, x'\in \mathscr{H}$ with $Ax\neq 0$ and $A^*x'\neq0$.
Suppose that there are $\lambda,\lambda'\in (0,1)$ such that
\begin{align*}
\|\langle Ax,y\rangle\|=(\|Ax\|\|y\|)^\lambda(\|x\|\|Ay\|)^{1-\lambda}
\end{align*}
and
\begin{align*}
\|\langle A^*x',y\rangle\|=(\|A^*x'\|\|y\|)^{\lambda'}(\|x'\|\|A^*y\|)^{1-\lambda'} \qquad (y\in \mathscr{H}).
\end{align*}
Then $A$ is normal.
\end{corollary}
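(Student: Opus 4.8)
The plan is to apply Theorem \ref{T.2.12} twice, once to $A$ and once to $A^*$, and then combine the two resulting operator inequalities. First I would observe that the first displayed equality is exactly the equality case of \eqref{I.D.2.1} for the operator $A$ at the distinguished vector $x$ (with $Ax\neq 0$), holding for every $y\in\mathscr{H}$ with exponent $\lambda$. Hence Theorem \ref{T.2.12} applies directly and yields that $A$ is cohyponormal, i.e.\ $|A|^2\leq |A^*|^2$, which (since $|A|^2=A^*A$ and $|A^*|^2=AA^*$) says $A^*A\leq AA^*$.

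Next I would recognize that the second displayed equality is precisely the equality case of \eqref{I.D.2.1} with $A$ replaced by $A^*$, at the vector $x'$ (with $A^*x'\neq 0$), holding for every $y$ with exponent $\lambda'$; the only cosmetic difference is the harmless reordering of the factors $\|x'\|\|A^*y\|$ against $\|A^*y\|\|x'\|$. Applying Theorem \ref{T.2.12} to the operator $A^*$ therefore gives that $A^*$ is cohyponormal, that is $|A^*|^2\leq |(A^*)^*|^2=|A|^2$, equivalently $AA^*\leq A^*A$.

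Combining the two inequalities $A^*A\leq AA^*$ and $AA^*\leq A^*A$ forces $A^*A=AA^*$, which is exactly the assertion that $A$ is normal. Since both steps are immediate instances of Theorem \ref{T.2.12}, there is essentially no obstacle here; the only point requiring care is to verify that the second hypothesis genuinely matches the equality condition of \eqref{I.D.2.1} for $A^*$ (rather than for $A$), so that Theorem \ref{T.2.12} is legitimately applicable to $A^*$ and produces the reverse cohyponormality inequality.
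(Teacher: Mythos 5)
Your proof is correct and is exactly the argument the paper intends: the paper states this corollary as an immediate consequence of Theorem \ref{T.2.12} without further proof, and your two applications of that theorem (to $A$ and to $A^*$), combined via $A^*A\leq AA^*$ and $AA^*\leq A^*A$, spell out precisely that deduction.
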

Recall that an element $U$ of $\mathcal{L}(\mathscr{H})$ is said to be a partial
isometry if $U^*U$ is a projection in $\mathcal{L}(\mathscr{H})$.
\begin{lemma}\cite[Proposition 15.3.7]{Wegge}\label{L.2.3}
Let $A\in \mathcal{L}(\mathscr{H})$. Then the following statements are equivalent:
\begin{itemize}
\item[(i)] $\mathscr{H} = \mathcal{N}(|A|)\oplus \overline{\mathcal{R}(|A|)}$ and $\mathscr{H} = \mathcal{N}(A^*)\oplus \overline{\mathcal{R}(A)}$.
\item[(ii)] Both $\overline{\mathcal{R}(|A|)}$ and $\overline{\mathcal{R}(A)}$ are orthogonally complemented.
\item[(iii)] $A$ has the polar decomposition $A = U|A|$, where $U\in\mathcal{L}(\mathscr{H})$ is a partial
isometry such that
\begin{align}
\mathcal{N}(U) = \mathcal{N}(A), \quad \mathcal{N}(U^*) = \mathcal{N}(A^*),
\quad \mathcal{R}(U) = \overline{\mathcal{R}(A)}, \quad \mathcal{R}(U^*) = \overline{\mathcal{R}(A^*)}.
\end{align}
\end{itemize}
\end{lemma}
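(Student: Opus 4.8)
The plan is to prove the chain of equivalences (i) $\Leftrightarrow$ (ii) $\Leftrightarrow$ (iii), using Lemma \ref{L.2.1}(ii) as the bridge between orthogonal complementability and the splitting of $\mathscr{H}$. First I would record that since $|A|$ is positive, hence self-adjoint, Lemma \ref{L.2.1}(ii) applied to $|A|$ gives $\mathcal{N}(|A|) = {\mathcal{R}(|A|)}^{\perp}$; consequently $\overline{\mathcal{R}(|A|)}$ is orthogonally complemented precisely when $\mathscr{H} = \mathcal{N}(|A|)\oplus\overline{\mathcal{R}(|A|)}$, the only candidate complement being the orthogonal one. Applying the same lemma to $A$ yields $\mathcal{N}(A^*) = {\mathcal{R}(A)}^{\perp}$, so orthogonal complementability of $\overline{\mathcal{R}(A)}$ is equivalent to $\mathscr{H} = \mathcal{N}(A^*)\oplus\overline{\mathcal{R}(A)}$. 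Reading these two equivalences together gives (i) $\Leftrightarrow$ (ii) at once.

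For (ii) $\Rightarrow$ (iii) I would build the partial isometry by hand. The key identity is $\big\||A|x\big\| = \|Ax\|$ for every $x$, which follows from $\langle |A|x, |A|x\rangle = \langle |A|^2 x, x\rangle = \langle A^*Ax, x\rangle = \langle Ax, Ax\rangle$. Hence the assignment $|A|x \mapsto Ax$ is a well-defined isometry on $\mathcal{R}(|A|)$ and extends by continuity to an isometry $U_0$ on $\overline{\mathcal{R}(|A|)}$ with $\mathcal{R}(U_0) = \overline{\mathcal{R}(A)}$. Using the orthogonal decomposition $\mathscr{H} = \mathcal{N}(|A|)\oplus\overline{\mathcal{R}(|A|)}$ supplied by (ii), I would extend $U_0$ to all of $\mathscr{H}$ by declaring $U = 0$ on $\mathcal{N}(|A|)$, and then verify that $U$ is adjointable, that $A = U|A|$, that $U^*U$ is the orthogonal projection onto $\overline{\mathcal{R}(|A|)}$ so that $U$ is a partial isometry, and finally the four identities $\mathcal{N}(U) = \mathcal{N}(A)$, $\mathcal{N}(U^*) = \mathcal{N}(A^*)$, $\mathcal{R}(U) = \overline{\mathcal{R}(A)}$, $\mathcal{R}(U^*) = \overline{\mathcal{R}(A^*)}$.

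The converse (iii) $\Rightarrow$ (ii) is the soft direction: if $A = U|A|$ with $U$ a partial isometry, then $U^*U$ and $UU^*$ are projections in $\mathcal{L}(\mathscr{H})$, and their ranges are exactly $\overline{\mathcal{R}(|A|)}$ and $\overline{\mathcal{R}(A)}$ in view of the stated range conditions on $U$. Since the range of a projection in $\mathcal{L}(\mathscr{H})$ is automatically orthogonally complemented, its complement being the range of the complementary projection, we recover (ii).

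The main obstacle is the extension step inside (ii) $\Rightarrow$ (iii): over a general Hilbert $C^*$-module one cannot extend a map defined only on a closed submodule to an adjointable operator on $\mathscr{H}$, and it is precisely the orthogonal complementability hypothesis in (ii) that removes this difficulty by furnishing a genuine orthogonal direct summand on which to set $U$ equal to zero. I would therefore spend most of the effort checking adjointability of $U$ together with the four kernel and range identities, since these are where the module structure, as opposed to the Hilbert space intuition, must be handled with care.
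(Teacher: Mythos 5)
This statement is quoted in the paper as \cite[Proposition 15.3.7]{Wegge} and is given no proof there, so there is nothing internal to compare your argument against; what you have written is essentially the standard textbook proof that the cited source carries out. Your reduction of (i)$\Leftrightarrow$(ii) via $\mathcal{N}(|A|)=\mathcal{R}(|A|)^{\perp}$ and $\mathcal{N}(A^*)=\mathcal{R}(A)^{\perp}$ is correct, and your construction of $U$ in (ii)$\Rightarrow$(iii) (extend $|A|x\mapsto Ax$ by continuity on $\overline{\mathcal{R}(|A|)}$, set it to zero on the complement) is the right one; you are also right that adjointability of $U$ is the genuinely module-theoretic point, and that it needs the \emph{second} orthogonal decomposition $\mathscr{H}=\mathcal{N}(A^*)\oplus\overline{\mathcal{R}(A)}$ to define $U^*$ explicitly as $U_0^{-1}$ on $\overline{\mathcal{R}(A)}$ and $0$ on $\mathcal{N}(A^*)$ — since you only sketch this verification, that step remains to be written out. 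One small point in (iii)$\Rightarrow$(ii) deserves a line of justification: the range of the projection $U^*U$ is $\mathcal{R}(U^*)=\overline{\mathcal{R}(A^*)}$, and to conclude that $\overline{\mathcal{R}(|A|)}$ is orthogonally complemented you need the identity $\overline{\mathcal{R}(A^*)}=\overline{\mathcal{R}(|A|)}$; this follows from $A^*=|A|U^*$ together with $\overline{\mathcal{R}(|A|)}=\overline{\mathcal{R}(|A|^2)}=\overline{\mathcal{R}(A^*A)}\subseteq\overline{\mathcal{R}(A^*)}$ (using, e.g., $\|\,|A|-|A|^2(|A|+\varepsilon)^{-1}\|\leq\varepsilon$), but it is not immediate from the four stated conditions on $U$ alone.
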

The following lemma will be useful in the proof of the next result.
\begin{lemma}\cite[Lemma 3.12]{Liu-Luo-Xu}\label{L.2.4}
Let $A = U|A|$ be the polar decomposition of $A\in \mathcal{L}(\mathscr{H})$.
Then for any $t> 0$, the following statements are valid:
\begin{itemize}
\item[(i)] $U|A|^tU^* = (U|A|U^*)^t = |A^*|^t$.
\item[(ii)] $U|A|^t = |A^*|^tU$.
\item[(iii)] $U^*|A^*|^tU = (U^*|A^*|U)^t = |A|^t$.
\end{itemize}
\end{lemma}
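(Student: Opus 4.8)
The plan is to reduce everything to the base case $t = 1$ of part (i) and then bootstrap to arbitrary $t > 0$ by means of the continuous functional calculus. The engine of the argument is the pair of relations governing the partial isometry $U$ furnished by Lemma \ref{L.2.3}.

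First I would record that, by Lemma \ref{L.2.3}(iii) together with Lemma \ref{L.2.1}(ii), $U^*U$ is the orthogonal projection onto $\overline{\mathcal{R}(A^*)} = \overline{\mathcal{R}(|A|)}$, so that $U^*U|A| = |A|$ and, taking adjoints, $|A|U^*U = |A|$; in particular $U^*U$ commutes with $|A|$. For the base case I would compute $|A^*|^2 = AA^* = U|A|^2U^*$ and observe that $U|A|U^* = (U|A|^{1/2})(U|A|^{1/2})^* \geq 0$. Then $(U|A|U^*)^2 = U|A|(U^*U)|A|U^* = U|A|^2U^* = |A^*|^2$, and uniqueness of the positive square root forces $U|A|U^* = |A^*|$.

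Next I would treat integer powers: a short induction based on $|A|(U^*U) = |A|$ yields $(U|A|U^*)^n = U|A|^nU^*$ and $U^*U|A|^n = |A|^n$ for every integer $n \geq 1$. The main obstacle, and the step demanding care, is transferring these identities from polynomials to the function $s \mapsto s^t$. I would approximate $s^t$ uniformly on the interval $[0, \|A\|]$, which contains both $\sigma(|A|)$ and $\sigma(|A^*|)$, by polynomials $p_k$ with $p_k(0) = 0$; this is possible precisely because $s^t$ is continuous and vanishes at the origin. For such $p_k$ the integer-power identities give $U p_k(|A|)U^* = p_k(U|A|U^*)$ and $U^*U\, p_k(|A|) = p_k(|A|)$, while the continuous functional calculus gives $p_k(|A|) \to |A|^t$ and $p_k(U|A|U^*) \to (U|A|U^*)^t$ in norm. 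Passing to the limit produces $U^*U|A|^t = |A|^t$ together with $U|A|^tU^* = (U|A|U^*)^t = |A^*|^t$, which is exactly (i).

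Finally, the remaining parts follow formally. Multiplying (i) on the right by $U$ and invoking $U^*U|A|^t = |A|^t$ gives $|A^*|^tU = U|A|^tU^*U = U|A|^t$, which is (ii). For (iii) I would note that $A^* = |A|U^* = U^*|A^*|$ is the polar decomposition of $A^*$ with partial isometry $U^*$ (its existence being guaranteed by Lemma \ref{L.2.3}(ii), since $\overline{\mathcal{R}(A)}$ and $\overline{\mathcal{R}(A^*)}$ are orthogonally complemented), so applying part (i) to $A^*$ yields $U^*|A^*|^tU = (U^*|A^*|U)^t = |A|^t$.
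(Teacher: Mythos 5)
Your proof is correct, but note that the paper does not prove this lemma at all: it is quoted verbatim from \cite[Lemma 3.12]{Liu-Luo-Xu}, so there is no in-paper argument to compare against. Your argument is a sound, self-contained derivation: the reduction to the base case $U|A|U^* = |A^*|$ via uniqueness of positive square roots, the induction on integer powers using $U^*U|A| = |A|$, and the passage to general $t>0$ by uniform polynomial approximation with $p_k(0)=0$ (the vanishing constant term being essential, since $U(c_0 I)U^* = c_0 UU^* \neq c_0 I$) are all handled correctly, and (ii) and (iii) do follow formally as you say, the latter because $A^* = U^*|A^*|$ is itself a polar decomposition. The only cosmetic looseness is the appeal to Lemma \ref{L.2.3}(ii) for the polar decomposition of $A^*$ --- what you actually need, and in effect verify directly, is the identity $U^*|A^*| = U^*U|A|U^* = |A|U^* = A^*$ together with the kernel/range conditions for the partial isometry $U^*$.
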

\begin{theorem}\label{P.2.11}
Let $A\in \mathcal{L}(\mathscr{H})$ have the polar decomposition $A = U|A|$.
If $A$ is semi-hyponormal, then $A$ belongs to $GCSI(\mathscr{H})$ with $\lambda = 1/2$.
\end{theorem}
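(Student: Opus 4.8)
The plan is to verify the inequality \eqref{I.D.2.1} directly with $\lambda = 1/2$, i.e. to establish
\[
\|\langle Ax, y\rangle\| \leq (\|Ax\|\|y\|)^{1/2}(\|Ay\|\|x\|)^{1/2} \qquad (x, y \in \mathscr{H}).
\]
The first preparatory observation is that $\|Az\| = \||A|z\|$ for every $z \in \mathscr{H}$, since $\|Az\|^2 = \|\langle A^*Az, z\rangle\| = \|\langle |A|z, |A|z\rangle\| = \||A|z\|^2$; this lets me replace the factors $\|Ax\|$ and $\|Ay\|$ on the right-hand side by $\||A|x\|$ and $\||A|y\|$, which is the form in which I will produce the bound.

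The central idea is to factor the inner product symmetrically through $|A|^{1/2}$. Writing $A = U|A| = U|A|^{1/2}|A|^{1/2}$ and moving the self-adjoint factor $|A|^{1/2}$ together with $U$ onto the second slot gives
\[
\langle Ax, y\rangle = \langle |A|^{1/2}x,\, |A|^{1/2}U^*y\rangle,
\]
and the module Cauchy--Schwarz inequality \eqref{I.1.1} then yields $\|\langle Ax, y\rangle\| \leq \||A|^{1/2}x\|\,\||A|^{1/2}U^*y\|$. The step I expect to be the crux is rewriting the second factor: computing $\||A|^{1/2}U^*y\|^2 = \|\langle U|A|U^*y, y\rangle\|$ and invoking Lemma \ref{L.2.4}(i) with $t=1$, which gives $U|A|U^* = |A^*|$, I obtain $\||A|^{1/2}U^*y\| = \||A^*|^{1/2}y\|$. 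Here the polar decomposition hypothesis enters in an essential way, through Lemma \ref{L.2.4}.

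At this point the semi-hyponormality of $A$, namely $|A^*| \leq |A|$, is applied: it gives $\langle |A^*|y, y\rangle \leq \langle |A|y, y\rangle$ in $\mathscr{A}_+$, and since $\|a\| \leq \|b\|$ whenever $0 \leq a \leq b$ in a $C^*$-algebra, this converts to $\||A^*|^{1/2}y\| \leq \||A|^{1/2}y\|$, whence $\|\langle Ax, y\rangle\| \leq \||A|^{1/2}x\|\,\||A|^{1/2}y\|$. Finally I bound each surviving factor by a further application of \eqref{I.1.1}, using $\||A|^{1/2}x\|^2 = \|\langle |A|x, x\rangle\| \leq \||A|x\|\|x\|$ and the analogous estimate for $y$; regrouping the resulting product and substituting $\||A|x\| = \|Ax\|$, $\||A|y\| = \|Ay\|$ gives exactly $(\|Ax\|\|y\|)^{1/2}(\|Ay\|\|x\|)^{1/2}$. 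The only delicate points are the adjoint bookkeeping in the factorization and the conversion of the operator inequality $|A^*| \leq |A|$ into a norm inequality via operator monotonicity of $\|\cdot\|$ on positive elements; everything else amounts to two uses of the module Cauchy--Schwarz inequality.
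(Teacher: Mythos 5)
Your proposal is correct and follows essentially the same route as the paper's proof: factor $\langle Ax,y\rangle=\langle |A|^{1/2}x,|A|^{1/2}U^*y\rangle$, apply the module Cauchy--Schwarz inequality, use Lemma \ref{L.2.4}~(i) to identify $U|A|U^*=|A^*|$, invoke $|A^*|\leq|A|$ to pass to $\||A|^{1/2}y\|$, and finish with a second application of \eqref{I.1.1} together with $\||A|z\|=\|Az\|$. The only cosmetic difference is that you phrase the intermediate quantities as vector norms $\||A|^{1/2}x\|$ where the paper writes $\|\langle |A|x,x\rangle\|^{1/2}$; these are the same numbers.
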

\begin{proof}
Let $x, y \in \mathscr{H}$. Since $|A^*|\leq |A|$, we get $0 \leq \langle |A^*|y, y\rangle \leq \langle |A|y, y\rangle$
and hence
\begin{align}\label{I.1.P.2.11}
\big\|\langle |A^*|y, y\rangle\big\| \leq \big\|\langle |A|y, y\rangle\big\|.
\end{align}
Therefore, by the Cauchy--Schwarz inequality, we have
\begin{align*}
\big\|\langle Ax, y\rangle\big\|^2 &= \big\|\langle |A|x, U^*y\rangle\big\|^2
= \big\|\langle |A|^{1/2}x, |A|^{1/2}U^*y\rangle\big\|^2
\\& \leq \big\|\langle |A|^{1/2}x, |A|^{1/2}x\rangle\big\| \big\|\langle |A|^{1/2}U^*y, |A|^{1/2}U^*y\rangle\big\|
\\& = \big\|\langle |A|x, x\rangle\big\| \big\|\langle U|A|U^*y, y\rangle\big\|
\\& = \big\|\langle |A|x, x\rangle\big\| \big\|\langle |A^*|y, y\rangle\big\| \qquad \qquad \qquad \qquad \big(\mbox{by Lemma \ref{L.2.4}~(i)}\big)
\\& \leq \big\|\langle |A|x, x\rangle\big\| \big\|\langle |A|y, y\rangle\big\| \qquad \qquad \qquad \qquad \,\, \big(\mbox{by}\,\,(\ref{I.1.P.2.11})\big)
\\& \leq \big\||A|x\big\|\|x\|\big\||A|y\big\|\|y\|
\\& = \big\|Ax\big\|\|x\|\big\|Ay\big\|\|y\| \qquad
\big(\mbox{since}\,\,\big\||A|z\big\| = \big\|Az\big\| \,\,\mbox{for every}\,\,z \in \mathscr{H}\big)
\\& = \left(\left(\big\|Ax\big\|\|y\|\right)^{1/2}\left(\big\|Ay\big\|\|x\|\right)^{1/2}\right)^2.
\end{align*}
Thus
\begin{align*}
\big\|\langle Ax, y\rangle\big\|\leq (\big\|Ax\big\|\|y\|)^{1/2}(\big\|Ay\big\|\|x\|)^{1/2}
\end{align*}
and so $A$ belongs to $GCSI(\mathscr{H})$ with $\lambda = 1/2$.
\end{proof}
Our next result reads as follows.
\begin{theorem}\label{L.2.15}
Let $A\in \mathcal{L}(\mathscr{H})$ have the polar
decomposition $A = U|A|$. If $A\in GCSI(\mathscr{H})$, then $A$ is paranormal.
\end{theorem}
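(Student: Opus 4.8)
The plan is to establish the paranormal inequality $\|Az\|^2 \le \|A^2 z\|\,\|z\|$ directly, for every $z \in \mathscr{H}$, by feeding a well-chosen pair of vectors into the generalized Cauchy--Schwarz inequality \eqref{I.D.2.1}. The guiding observation is that in a Hilbert $C^*$-module one has $\|w\|^2 = \|\langle w, w\rangle\|$ for each $w \in \mathscr{H}$, so the left-hand side of \eqref{I.D.2.1} collapses to a squared norm precisely when the two arguments of the inner product coincide.

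Since $A \in GCSI(\mathscr{H})$, I would first fix $\lambda \in (0,1)$ for which \eqref{I.D.2.1} is valid, and then apply it with $x = z$ and $y = Az$. The left-hand side becomes $\|\langle Az, Az\rangle\| = \|Az\|^2$. Reading off the four factors on the right-hand side gives $\|Ax\| = \|Az\|$, $\|y\| = \|Az\|$, $\|Ay\| = \|A^2 z\|$ and $\|x\| = \|z\|$, so that \eqref{I.D.2.1} reduces to
\begin{align*}
\|Az\|^2 \le \big(\|Az\|\,\|Az\|\big)^{\lambda}\big(\|A^2 z\|\,\|z\|\big)^{1-\lambda} = \|Az\|^{2\lambda}\big(\|A^2 z\|\,\|z\|\big)^{1-\lambda}.
\end{align*}

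To finish, I would separate two cases. If $Az = 0$, the paranormal inequality holds trivially. If $Az \ne 0$, then $\|Az\| > 0$, and I can divide both sides of the displayed inequality by $\|Az\|^{2\lambda}$ to obtain $\|Az\|^{2(1-\lambda)} \le (\|A^2 z\|\,\|z\|)^{1-\lambda}$; raising to the power $1/(1-\lambda) > 0$ then yields exactly $\|Az\|^2 \le \|A^2 z\|\,\|z\|$, which is paranormality.

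I do not anticipate a genuine obstacle in the computation itself; the only points requiring care are the harmless case distinction at $Az = 0$ and the observation that the exponent manipulation is legitimate because $1 - \lambda > 0$. It is worth remarking that this direct route makes no apparent use of the existence of the polar decomposition $A = U|A|$; the hypothesis presumably enters an alternative argument routed through Lemma \ref{L.2.4} and the relation between $|A|$ and $|A^*|$, but the substitution $x = z,\ y = Az$ bypasses that machinery entirely.
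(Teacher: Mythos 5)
Your proof is correct, and it is genuinely simpler than the one in the paper. The paper's argument first establishes the paranormal inequality only for vectors of the form $U^*z$: it applies \eqref{I.D.2.1} with $x=U^*z$ and $y=|A^*|z$, using the identities $AU^*z=U|A|U^*z=|A^*|z$ and $\|A|A^*|z\|=\|A^2U^*z\|$ from Lemma \ref{L.2.4}~(i), and then extends to an arbitrary $w\in\mathscr{H}$ via the decomposition $w=v+U^*z$ with $v\in\mathcal{N}(A)$ coming from Lemma \ref{L.2.3}. But since $AU^*z=|A^*|z$, the paper's substitution is exactly your substitution $y=Ax$ performed only for $x\in\mathcal{R}(U^*)$; your observation is that nothing forces this restriction, and plugging $x=z$, $y=Az$ directly for arbitrary $z$ collapses the left side to $\|\langle Az,Az\rangle\|=\|Az\|^2$ and gives
\begin{align*}
\|Az\|^2\leq\|Az\|^{2\lambda}\big(\|A^2z\|\,\|z\|\big)^{1-\lambda},
\end{align*}
from which paranormality follows after the harmless case split at $Az=0$ and the legitimate exponent manipulation (using $1-\lambda>0$). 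What your route buys is a strictly stronger theorem: the hypothesis that $A$ admits a polar decomposition is not needed at all, whereas the paper's detour through $U$ makes that hypothesis load-bearing. The one point worth stating explicitly in a final write-up is the identity $\|\langle w,w\rangle\|=\|w\|^2$ for elements of a Hilbert $C^*$-module, which is what makes the left-hand side collapse; you do state it, so there is no gap.
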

\begin{proof}
Let $A\in GCSI(\mathscr{H})$. Therefore, there exists $\lambda \in (0, 1)$ such that (\ref{I.D.2.1}) holds.
We first show that for any vector $z \in \mathscr{H}$
\begin{align}\label{I.2.L.2.15}
\big\|AU^*z\big\|^2 \leq \big\|A^2U^*z\big\|\|U^*z\|.
\end{align}
If $z\in \mathcal{N}(U^*)$, then (\ref{I.2.L.2.15}) is trivially true. So, assume that $z\notin \mathcal{N}(U^*)$.
It follows from Lemma \ref{L.2.3} that $\mathcal{N}(A^*) = \mathcal{N}(U^*)$, so we get $z\notin \mathcal{N}(A^*)$.
Hence $\|A^*z\|>0$.
Therefore,
\begin{align*}
\big\|AU^*z\big\|^2 &= \big\|\langle AU^*z, AU^*z\rangle\big\|
\\& = \big\|\langle U|A|^2U^*z, z\rangle\big\|
\\& = \big\|\langle |A^*|^2z, z\rangle\big\| \qquad \qquad \qquad \qquad \big(\mbox{by Lemma \ref{L.2.4}~(i)}\big)
\\& = \big\||A^*|z\big\|^2 = \|A^*z\|^2.
\end{align*}
Thus
\begin{align}\label{I.3.L.2.15}
\big\|AU^*z\big\| = \big\||A^*|z\big\|> 0.
\end{align}
Again, by Lemma \ref{L.2.4}~(i), we have
\begin{align*}
\big\|A|A^*|z\big\|^2 &= \big\|\langle A|A^*|z, A|A^*|z\rangle\big\| = \big\|\langle AU|A|U^*z, AU|A|U^*z\rangle\big\|
\\& = \big\|\langle A^2U^*z, A^2U^*z\rangle\big\| = \big\|A^2U^*z\big\|^2,
\end{align*}
whence
\begin{align}\label{I.4.L.2.15}
\big\|A|A^*|z\big\| = \big\|A^2U^*z\big\|.
\end{align}
Utilizing Lemma \ref{L.2.4}~(i), (\ref{I.D.2.1}), (\ref{I.2.L.2.15}), (\ref{I.3.L.2.15}) and (\ref{I.4.L.2.15}), we have
\begin{align*}
\big\|AU^*z\big\|^2 &= \big\|U|A|U^*z\big\|^2
\\& = \big\|\langle U|A|U^*z, U|A|U^*z\rangle\big\|
\\& = \big\|\langle AU^*z, |A^*|z\rangle\big\|
\\& \leq (\big\|AU^*z\big\|\big\||A^*|z\big\|)^{\lambda}(\big\|A|A^*|z\big\|\big\|U^*z\big\|)^{(1 - \lambda)}
\\& = \big\|AU^*z\big\|^{2\lambda}(\big\|A^2U^*z\big\|\big\|U^*z\big\|)^{(1 - \lambda)},
\end{align*}
and hence
\begin{align*}
\big\|AU^*z\big\|^{2(1-\lambda)} \leq (\big\|A^2U^*z\big\|\big\|U^*z\big\|)^{(1 - \lambda)}.
\end{align*}
This ensures that
\begin{align}\label{I.5.L.2.15}
\big\|AU^*z\big\|^2 \leq \big\|A^2U^*z\big\|\big\|U^*z\big\|.
\end{align}
We need only to show that $\|Aw\|^2 \leq \|A^2w\|\|w\|$ for every $w \in \mathscr{H}$.
To this end, suppose that $w \in \mathscr{H}$.
Since $\mathscr{H} = \mathcal{N}(U)\oplus \mathcal{R}(U^*) = \mathcal{N}(A)\oplus \mathcal{R}(U^*)$,
there exist $v\in \mathcal{N}(A)$ and $z \in \mathscr{H}$ such that $w = v + U^*z$.
Then
\begin{align*}
\|w\|^2 = \|\langle v, v\rangle + \langle U^*z, U^*z\rangle\|
\geq \|\langle U^*z, U^*z\rangle\| = \|U^*z\|^2,
\end{align*}
and so
\begin{align}\label{I.6.L.2.15}
\|w\|\geq \|U^*z\|.
\end{align}
Employing (\ref{I.5.L.2.15}) and (\ref{I.6.L.2.15}) we obtain
\begin{align*}
\|Aw\|^2 = \|AU^*z\|^2 \leq \|A^2U^*z\|\|U^*z\| \leq \|A^2w\|\|w\|
\end{align*}
and we arrive at the desired result.
\end{proof}
In the following theorem, we get a characterization of semi-hyponormality of
adjointable operators.
\begin{theorem}\label{T.2.14}
Let $A\in \mathcal{L}(\mathscr{H})$ have the polar decomposition $A = U|A|$.
Then the following statements are equivalent:
\begin{itemize}
\item[(i)] $A$ is semi-hyponormal.
\item[(ii)] $\big\|\langle Ax, y\rangle\big\| \leq \big\|{|A|}^{1/2}x\big\|\big\|{|A|}^{1/2}y\big\|$ for all $x, y \in\mathscr{H}$.
\end{itemize}
\end{theorem}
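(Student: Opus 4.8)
The plan is to prove the two implications separately, with $(ii)\Rightarrow(i)$ being the substantive direction. For $(i)\Rightarrow(ii)$ I would simply follow the computation already carried out in Theorem~\ref{P.2.11}. Writing $A=U|A|$ and using that $|A|^{1/2}$ is self-adjoint, one has $\langle Ax,y\rangle=\langle|A|x,U^{*}y\rangle=\langle|A|^{1/2}x,|A|^{1/2}U^{*}y\rangle$, so the module Cauchy--Schwarz inequality \eqref{I.1.1} gives $\|\langle Ax,y\rangle\|\le\||A|^{1/2}x\|\,\||A|^{1/2}U^{*}y\|$. It then remains to dominate $\||A|^{1/2}U^{*}y\|$ by $\||A|^{1/2}y\|$. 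Using Lemma~\ref{L.2.4}(i) in the form $U|A|U^{*}=|A^{*}|$, one computes $\||A|^{1/2}U^{*}y\|^{2}=\|\langle|A|U^{*}y,U^{*}y\rangle\|=\|\langle|A^{*}|y,y\rangle\|$, while $\||A|^{1/2}y\|^{2}=\|\langle|A|y,y\rangle\|$. Semi-hyponormality gives $0\le\langle|A^{*}|y,y\rangle\le\langle|A|y,y\rangle$, hence $\|\langle|A^{*}|y,y\rangle\|\le\|\langle|A|y,y\rangle\|$, and $(ii)$ follows.

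For $(ii)\Rightarrow(i)$ the first step is to insert a cleverly chosen vector into $(ii)$ so as to produce a self-improving estimate. Taking $x=U^{*}y$ and using $AU^{*}=U|A|U^{*}=|A^{*}|$ from Lemma~\ref{L.2.4}(i), the left side becomes $\|\langle|A^{*}|y,y\rangle\|=\||A^{*}|^{1/2}y\|^{2}$, while the identity recorded above gives $\||A|^{1/2}U^{*}y\|=\||A^{*}|^{1/2}y\|$. Thus $(ii)$ specializes to
\[
\||A^{*}|^{1/2}y\|^{2}\le\||A^{*}|^{1/2}y\|\,\||A|^{1/2}y\|,
\]
and cancelling the common factor (the inequality being trivial when it vanishes) produces the clean bound $\||A^{*}|^{1/2}y\|\le\||A|^{1/2}y\|$ for every $y\in\mathscr{H}$.

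The hard part is that the estimate just obtained is only a comparison of norms: $\||A^{*}|^{1/2}y\|\le\||A|^{1/2}y\|$ amounts to $\|\langle|A^{*}|y,y\rangle\|\le\|\langle|A|y,y\rangle\|$, and in the Hilbert module setting such a norm comparison of the $\mathscr{A}$-valued inner products does \emph{not} by itself yield the order comparison $\langle|A^{*}|y,y\rangle\le\langle|A|y,y\rangle$ that would give $|A^{*}|\le|A|$. I would overcome this exactly as in the proof of Theorem~\ref{Thno}, by descending to the coefficient algebra $\mathscr{A}$. Fix $w\in\mathscr{H}$ and set $a=\langle|A^{*}|w,w\rangle$ and $b=\langle|A|w,w\rangle$, both in $\mathscr{A}_{+}$. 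For any $c\in\mathscr{A}_{+}$ the module identity $c\langle|A^{*}|w,w\rangle c=\langle|A^{*}|(wc),wc\rangle$ gives $\|a^{1/2}c\|=\sqrt{\|cac\|}=\||A^{*}|^{1/2}(wc)\|$, and similarly $\|b^{1/2}c\|=\||A|^{1/2}(wc)\|$.

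Applying the bound from the previous step with the vector $y=wc$ then yields $\|a^{1/2}c\|\le\|b^{1/2}c\|$ for all $c\in\mathscr{A}_{+}$, so Lemma~\ref{L.2.11.1} gives $a\le b$, that is $\langle|A^{*}|w,w\rangle\le\langle|A|w,w\rangle$. Since $w\in\mathscr{H}$ is arbitrary and an operator in $\mathcal{L}(\mathscr{H})$ is positive precisely when all its quadratic forms are, this says $|A|-|A^{*}|\ge0$, i.e.\ $|A^{*}|\le|A|$, so $A$ is semi-hyponormal and the equivalence is established. I expect the genuinely delicate point to be this last passage from vectorwise norm control to an operator order relation, which is why the reduction to $\mathscr{A}$ via Lemma~\ref{L.2.11.1} is essential rather than cosmetic.
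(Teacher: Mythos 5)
Your proposal is correct and follows essentially the same route as the paper: the direction (i)$\Rightarrow$(ii) is the computation from Theorem~\ref{P.2.11}, and for (ii)$\Rightarrow$(i) you specialize to $x=U^{*}y$, use $U|A|U^{*}=|A^{*}|$ to obtain $\||A^{*}|^{1/2}y\|\le\||A|^{1/2}y\|$, and then pass from this vectorwise norm estimate to the operator inequality $|A^{*}|\le|A|$ via Lemma~\ref{L.2.11.1}, exactly as the paper does by invoking the argument of Theorem~\ref{Thno}. Your explicit spelling-out of that last reduction (which the paper only gestures at) is a welcome clarification, but it is not a different method.
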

\begin{proof}
(i)$\Rightarrow$(ii) The implication follows from the proof of Theorem \ref{P.2.11}.

(ii)$\Rightarrow$(i) Suppose (ii) holds.
We will show that for any vector $z\in\mathscr{H}$
\begin{align}\label{I.1.T.2.14}
\big\||A^*|^{1/2}z\big\| \leq \big\||A|^{1/2}z\big\|.
\end{align}
Note that (\ref{I.1.T.2.14}) holds if $z\in \mathcal{N}\big(|A^*|^{1/2}\big)$.
So, let us suppose $z\notin \mathcal{N}\big(|A^*|^{1/2}\big)$.
By letting $x = U^*z$ and $y = z$ in (ii), we have
\begin{align}\label{I.2.T.2.14}
\big\|\langle AU^*z, z\rangle\big\|^2 \leq \big\|\langle |A|U^*z, U^*z\rangle\big\|\big\|\langle |A|z, z\rangle\big\|.
\end{align}
Therefore,
\begin{align*}
\big\||A^*|^{1/2}z\big\|^4 &= \big\|\langle |A^*|z, z\rangle\big\|^2
\\& = \big\|\langle U|A|U^*z, z\rangle\big\|^2 \qquad \qquad \qquad \qquad \qquad \big(\mbox{by Lemma \ref{L.2.4}~(i)}\big)
\\& = \big\|\langle AU^*z, z\rangle\big\|^2
\\& \leq \big\|\langle |A|U^*z, U^*z\rangle\big\| \big\|\langle |A|z, z\rangle\big\|
\qquad \qquad \qquad \qquad \qquad \big(\mbox{by (\ref{I.2.T.2.14})}\big)
\\& = \big\|\langle U|A|U^*z, z\rangle\big\| \big\|\langle |A|z, z\rangle\big\|
\\& = \big\|\langle |A^*|z, z\rangle\big\| \big\|\langle |A|z, z\rangle\big\| \qquad \qquad \qquad \qquad \big(\mbox{by Lemma \ref{L.2.4}~(i)}\big)
\\& = \big\||A^*|^{1/2}z\big\|^2 \big\||A|^{1/2}z\big\|^2,
\end{align*}
which yields $\big\||A^*|^{1/2}z\big\|^2 \leq \big\||A|^{1/2}z\big\|^2$.
Hence $\big\||A^*|^{1/2}z\big\| \leq \big\||A|^{1/2}z\big\|$.
Utilizing a similar argument as in Theorem \ref{T.2.12}, we conclude that $|A^*| \leq |A|$.
Thus $A$ is semi-hyponormal.
\end{proof}
We finish this paper with the following result.
\begin{corollary}\label{C.2.15}
Let $A\in \mathcal{L}(\mathscr{H})$ have the polar decomposition.
If the equality in $GCSI(\mathscr{H})$ holds for $A^*$, then
\begin{align*}
\big\|\langle Ax, y\rangle\big\| \leq \big\|{|A|}^{1/2}x\big\|\big\|{|A|}^{1/2}y\big\| \qquad (x, y \in\mathscr{H}).
\end{align*}
\end{corollary}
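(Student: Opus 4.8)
The plan is to deduce the desired inequality from the equivalence established in Theorem~\ref{T.2.14}; since $A$ already has the polar decomposition, it suffices to show that $A$ is semi-hyponormal, for then the implication (i)$\Rightarrow$(ii) of that theorem delivers exactly the claimed bound. First I would unwind the hypothesis. To say that the equality in $GCSI(\mathscr{H})$ holds for $A^*$ means that there exists $x'\in\mathscr{H}$ with $A^*x'\neq 0$ such that for every $y\in\mathscr{H}$ the equality in \eqref{I.D.2.1}, with $A$ replaced by $A^*$, holds for some $\lambda'\in(0,1)$. This is precisely the hypothesis of Theorem~\ref{T.2.12} with $A^*$ playing the role of $A$.

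Applying Theorem~\ref{T.2.12} to $A^*$, I would conclude that $A^*$ is cohyponormal. Reading off the definition of cohyponormality for the operator $B = A^*$, namely $|B|^2\leq|B^*|^2$, this says $|A^*|^2\leq|A|^2$. The next step is to pass from this inequality between squares to the corresponding inequality between the moduli: because the square-root map is operator monotone on $\mathscr{A}_+$, the relation $|A^*|^2\leq|A|^2$ forces $|A^*|\leq|A|$, which is exactly the statement that $A$ is semi-hyponormal. Invoking Theorem~\ref{T.2.14}~(i)$\Rightarrow$(ii) then yields $\big\|\langle Ax, y\rangle\big\|\leq\big\|{|A|}^{1/2}x\big\|\big\|{|A|}^{1/2}y\big\|$ for all $x,y\in\mathscr{H}$, as required.

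The only genuinely substantive step is the passage from $|A^*|^2\leq|A|^2$ to $|A^*|\leq|A|$, which rests on operator monotonicity of the square root in a $C^*$-algebra; this is a standard fact but should be cited explicitly. Everything else is a matter of correctly matching the hypothesis to Theorem~\ref{T.2.12} and then quoting Theorem~\ref{T.2.14}, so no nontrivial computation is needed beyond that monotonicity argument.
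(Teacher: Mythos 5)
Your proof is correct and follows essentially the same route as the paper's: apply Theorem~\ref{T.2.12} to $A^*$ to get cohyponormality of $A^*$, unwind this to $|A^*|^2\leq|A|^2$, pass to $|A^*|\leq|A|$ via operator monotonicity of the square root, and finish with Theorem~\ref{T.2.14}. Your explicit flagging of the operator-monotonicity step is a small improvement over the paper, which asserts the passage from $|A^*|^2\leq|A|^2$ to $|A^*|\leq|A|$ without comment.
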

\begin{proof}
Since the equality in $GCSI(\mathscr{H})$ holds for $A^*$, by Theorem \ref{T.2.12}, it follows that $A^*$ is cohyponormal.
Thus $|A^*|^2 \leq |A|^2$. Hence $|A^*| \leq |A|$, that is, $A$ is semi-hyponormal.
So, by Theorem \ref{T.2.14}, we deduce the desired result.
\end{proof}
\textbf{Acknowledgement.}
The author would like to thank Prof.~M.~S.~Moslehian, Prof.~Q.~Xu and Dr.~R.~Eskandari for their invaluable suggestions while writing this paper.
\bibliographystyle{amsplain}

\begin{thebibliography}{99}

\bibitem{A.B.F.M} J.~M.~Aldaz, S.~Barza, M.~Fujii and M.~S.~Moslehian,
\textit{Advances in Operator Cauchy--Schwarz inequalities and their reverses},
Ann.~Funct.~Anal. \textbf{6} (2015), no.~3, 275--295.

\bibitem{A.B.M} Lj.~Aramba\v{s}i\'{c}, D.~Baki\'{c} and M.~S.~Moslehian,
\textit{A treatment of the Cauchy-–Schwarz inequality in $C^*$-modules},
J.~Math.~Anal.~Appl. \textbf{381} (2011), 546–-556.

\bibitem{B.D} R.~Bhatia and C.~Davis,
\textit{A Cauchy--Schwartz inequality for operators with applications},
Linear Algebra Appl. \textbf{223/224} (1995), 119--129.

\bibitem{C.K.K} H.~Choi, Y.~Kim and E.~Ko,
\textit{On operators satisfying the generalized Cauchy--Schwarz inequality},
Proc.~Amer.~Math.~Soc. \textbf{145} (2017), 3447--3453

\bibitem{Dav} C.~Davis,
\textit{A Schwartz inequality for positive linear maps on $C^*$-algebras},
Illinois J.~Math. \textbf{18} (1974), 565--574.

\bibitem{F.M.X} X.~Fang, M.~S.~Moslehian and Q.~Xu,
\textit{On majorization and range inclusion of operators on Hilbert $C^*$-modules},
Linear Multilinear Algebra \textbf{66} (2018), no.~12, 2493--2500.

\bibitem{F.F.S} J.~I.~Fujii, M.~Fujii and Y.~Seo,
\textit{Operator inequalities on Hilbert $C^*$-modules via the Cauchy--Schwarz inequality},
Math.~Inequal.~Appl. \textbf{17} (2014), no. 12, 295--315.

\bibitem{I.V} D.~Ilisevi\'c and S.~Varo\v{s}anec,
\textit{On the Cauchy–-Schwarz inequality and its reverse in semi-inner product $C^*$-modules},
Banach J.~Math.~Anal. \textbf{1}(1) (2007), 78–-84.

\bibitem{Joi} M.~Joi\c{t}a,
\textit{On the Cauchy–-Schwarz inequality in $C^*$-algebras},
Math.~Rep.~(Bucur.) (3)\textbf{53} (3) (2001), 243–-246.

\bibitem{Ka} T.~Kato,
\textit{Perturbation Theory for Linear Operator},
2nd edn.~Springer–Verlag, Berlin Heidelberg, New York, Tokyo, (1984).

\bibitem{Lan} E.~C.~Lance,
\textit{Hilbert $C^*$-modules - a toolkit for operator algebraists},
London Mathematical Society Lecture Note Series, vol.~210,
Cambridge University Press, Cambridge, 1995.

\bibitem{Liu-Luo-Xu} N.~Liu, W.~Luo and Q.~Xu,
\textit{The polar decomposition for adjointable operators on Hilbert $C^*$-modules and centered operators},
Adv.~Oper.~Theory \textbf{3} (2018), no. 4, 855--867.

\bibitem{V.M.X} M.~Vosough, M.~S. Moslehian and Q.~Xu,,
\textit{Closed range and nonclosed range adjointable operators on Hilbert $C^*$-modules},
Positivity \textbf{22} (2018), 701--710.

\bibitem{Wat} H.~Watanabe,
\textit{Operators characterized by certain Cauchy--Schwarz type inequalities},
Publ.~Res.~Inst.~Math.~Sci., \textbf{30} (1994), no.~2, 249-–259.

\bibitem{Wegge} N.~E.~Wegge-Olsen,
\textit{$K$-theory and $C^*$-algebras: A friendly approach},
Oxford Univ.~Press, Oxford, England, 1993.

\end{thebibliography}

\end{document}